\newcommand{\addresseshere}{%
  \enddoc@text\let\enddoc@text\relax
}
\newtheorem{theorem}{Theorem}
\newtheorem{remark}[theorem]{Remark}
\newtheorem{lemma}[theorem]{Lemma}
\newtheorem{problem}{Problem}
\newcommand{\RR}{\mathbb{R}}
\newcommand{\Var}{{\mathbb V}\mathrm{ar}}
\newcommand{\E}{{\mathbb E}}
\newcommand{\cP}{{\mathcal P}}
\newcommand{\cS}{{\mathcal S}}
\newcommand{\cL}{{\mathcal L}}
\newcommand{\R}{{\mathbb R}}
\newcommand{\bx}{{\mathbf{x}}}
\newcommand{\bX}{{\mathbf{X}}}
\title[Partitions for stratified sampling]{\large Partitions for stratified sampling}
\author{Fran\c cois Cl\' ement}
\address{Sorbonne Universit\'e, CNRS, LIP6, Paris, France.}
\email{Francois.Clement@lip6.fr}
\author{Nathan Kirk}
\address{Queen's University Belfast, Belfast, United Kingdom.}
\email{nkirk09@qub.ac.uk}
\author{Florian Pausinger}
\address{Queen's University Belfast, Belfast, United Kingdom.}
\email{f.pausinger@qub.ac.uk}
\date{}
\begin{document}

\keywords{Jittered sampling; Stratified sampling; $L_p$-discrepancy}
\subjclass[2010]{ 11K38, 60C05 (primary), and 05A18, 60D99 (secondary)}
\maketitle


\begin{abstract}
Classical jittered sampling partitions $[0,1]^d$ into $m^d$ cubes for a positive integer $m$ and randomly places a point inside each of them, providing a point set of size $N=m^d$ with small discrepancy.
The aim of this note is to provide a construction of partitions that works for arbitrary $N$ and improves  straight-forward constructions. We show how to construct equivolume partitions of the $d$-dimensional unit cube with hyperplanes that are orthogonal to the main diagonal of the cube. 
We investigate the discrepancy of such point sets and optimise the expected discrepancy numerically by relaxing the equivolume constraint using different black-box optimisation techniques.
\end{abstract}


\section{Introduction}
Classical jittered sampling provides $d$-dimensional point sets with small discrepancy in $[0,1]^d$ consisting of $N=m^d$ points for a positive integer $m\geq 2$.
The main idea is to partition the unit cube into $N$ axis-parallel cubes of volume $1/m^d$ and to pick a uniform random point from each cube. This construction avoids local clusters one usually obtains when sampling $N$ i.i.d uniform random points from $[0,1]^d$. 
The expected (star) discrepancy of a set $\cP_N$ of $N$ i.i.d.~uniform random points in $[0,1]^d$ is of order $\Theta(\sqrt{1/N})$; see \cite{hnww} for the first upper bound, \cite{aist} for the first upper bound with explicit constant and \cite{doerr1} for the first lower bound as well as \cite{gnewuch} for the current state of the art results in this context. 
Recently Doerr \cite{doerr} determined the precise asymptotic order of the expected star-discrepancy of a point set obtained from jittered sampling:
$$ \E \cL_{\infty} (\cP_N) = \Theta \left( \frac{\sqrt{d} \sqrt{1+\log(N/d)}}{N^{\frac{1}{2}+\frac{1}{2d} }} \right).$$ 
This result is our motivation to look for other constructions of partitions. In particular, we aim to find a construction that works for arbitrary $N$ while improving the expected (star) discrepancy of a set of $N$ i.i.d.~uniform random points.
In fact, for $1<p<\infty$, a stratified set derived from a partition into $N>2$ equivolume sets \emph{always} has a smaller expected $\cL_p$-discrepancy than a set consisting of $N$ i.i.d.~random points. 
	This \emph{strong partition principle} was proven in \cite[Theorem 1]{mf21} (see also \cite{stefan1} for a weaker form) and raises the question which partition yields the stratified sample with the smallest mean $\cL_p$-discrepancy -- if such a partition exists. Or, as a more modest goal, which construction works well in general?
	
\subsection{A construction.} There are several straightforward ways to partition the $d$-dimensional unit cube $[0,1]^d$ into $N$ sets of equal volume. 
We could for example partition the interval $[0,1]$ into $N$ subintervals of equal length and use this to define $N$ slices of equal volume. 
In fact, we can partition an arbitrary number of the $d$ generating unit intervals into subintervals of equal length to generate grid-type partitions of the $d$-dimensional unit cube. 
In every such example it is straightforward to characterise the sets in the partition (since they are axis-parallel rectangles) and to prove that these sets have indeed all the same volume. However, any such construction only works for a restricted number of points and not for arbitrary $N$.

The aim of this note is to characterise another family of equivolume partitions that was recently studied in the context of jittered sampling \cite{mf21}.
Given the unit square $[0,1]^2$ and $N-1$ parallel lines $H_{i}$ with $i=1, \ldots, N-1$, which are orthogonal to the main diagonal, $D$, of the square, we would like to arrange the lines such that we obtain an equivolume partition of the unit square; see Figure \ref{fig:simplePartition}.

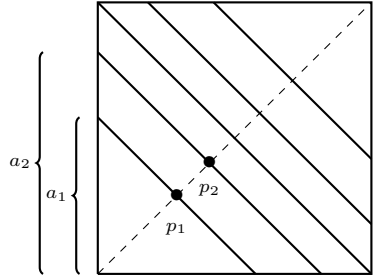
\begin{figure}[h]
\begin{tikzpicture}[scale=1.2]
		
		\draw[thick] (0,0) -- (3,0)-- (3,3) -- (0,3)-- (0,0);
		\draw[dashed] (0,0) -- (3,3);
		
		\draw[thick] (0,1.73) -- (1.73,0);
		\draw[thick] (0,2.45) -- (2.45,0);
		\draw[thick] (0,3) -- (3,0);
		\draw[thick] (3-1.73,3) -- (3,3-1.73);
		\draw[thick] (3-2.45,3) -- (3,3-2.45);
		
		\node at (0.865,0.865) {$\bullet$};
		\node at (0.865,0.5) {\tiny $p_1$};
		
		\node at (1.225,1.225) {$\bullet$};
		\node at (1.225,0.925) {\tiny $p_2$};
		
		\node at (-0.45, 0.865) {\tiny $a_1$};
		\node at (-0.85, 1.225) {\tiny $a_2$};
		
		\draw [decorate, decoration = {brace}, thick] (-0.2,0) --  (-0.2,1.73);
		\draw [decorate, decoration = {brace}, thick] (-0.6,0) --  (-0.6,2.45);
\end{tikzpicture}
\caption{Partition of the unit cube into $N=6$ equivolume slices that are orthogonal to the diagonal.} \label{fig:simplePartition}
\end{figure}

We denote the intersection $H_i \cap D$ of a line with the diagonal with $p_i$.
It is straightforward to calculate all points $p_i$ for arbitrary $N$. In fact, note that $H_i$ splits the unit square into two sets of volume $i/N$ and $1-i/N$. If $i\leq N/2$, we just need to look at the isosceles right triangle that $H_i$ forms with $(0,0)$. We know that this triangle has volume $i/N$. Denote the length of the two equal sides with $a_i>0$, then $a_i^2/2=i/N$ and $p_i = a_i/2$. Therefore, we get that
\begin{equation} \label{twoDim}
p_i = \sqrt{\frac{i}{2 N}},
\end{equation}
for all $i\leq N/2$. By symmetry, we also get the points $p_i$ with $i>N/2$.
The first question of this note is how to generalise this simple characterisation of equivolume partitions of the unit square to dimensions $d>2$?

To state this problem formally, we introduce further notation. Let $H_r^+$ be the positive half space defined as the set of all $\bx \in \RR^d$ satisfying
\begin{equation} \label{defH} x_1 + x_2 + \ldots + x_d  \geq r; \end{equation}
accordingly let  $H_r^-$ be the corresponding negative half space and let $H_r$ be the hyperplane of all points $\bx \in \RR^d$ satisying $x_1 + x_2 + \ldots + x_d = r$.
For a given $N$, we would like to find the points $0 < r_1 <\ldots < r_{N-1} < d$ such that the corresponding hyperplanes $H_{r_1}, \ldots, H_{r_{N-1}}$ define a partition of $[0,1]^d$ into $N$ equivolume sets. We call the set 
$$\cS(N,d):=\{r_i : i=1, \ldots, N-1\}$$ 
the \emph{generating set} of the partition. Using \eqref{twoDim} we see that
$$\cS(N,2) = \left\{ \sqrt{\frac{2i}{N}}: 1\leq i \leq N/2 \right\} \cup \left\{ 2 - \sqrt{\frac{2i}{N}}: 1 \leq i < N/2 \right\}.$$
Note that for even $N$ the point $\{1\}$ is in the set, while it is not for odd $N$.

\begin{problem} \label{p1}
For given dimension $d$ and number of sets $N$, characterise the set $\cS(N,d)$.
\end{problem}

\subsection{Optimisation}
Our construction is motivated by \cite[Example 2 and 3]{mf21}. It was shown that among all partitions of the unit square into two sets of equal-volume, the dividing hyperplane orthogonal to the diagonal and going through the center of the square gives the smallest discrepancy. Furthermore, it was observed that moving the dividing hyperplane along the diagonal and thus relaxing the equivolume constraint can further improve the expected discrepancy; see Figure \ref{fig:model}.
This motivates our second question.
For a given $N$, we would like to find the points $0 < r_1 < \ldots < r_{N-1} < d$ such that the corresponding hyperplanes $H_{r_1}, \ldots, H_{r_{N-1}}$ define a partition of $[0,1]^d$ into $N$ sets minimizing the expected discrepancy of the corresponding stratified sample. We call the set 
$$\cS^*(N,d):=\{r_i : i=1, \ldots, N-1\}$$ 
the \emph{minimal set} of the parameter pair $(N,d)$.

\begin{problem} \label{p2}
For given dimension $d$ and number of sets $N$, determine or approximate the minimal set $\cS^*(N,d)$.
\end{problem}

\begin{figure}[h!]
\centering
\begin{tikzpicture}[scale=2]
\draw [thick] (0,0) -- (1,0);
\draw [thick] (1,1) -- (1,0);
\draw [thick] (0,1) -- (1,1);
\draw [thick] (0,0) -- (0,1);
\draw [ultra thick] (0,1) -- (1,0);
\draw [ultra thick, dashed] (0.3,1) -- (1,0.3);
\draw [thick,->] (0.5,0.5)--(0.7,0.7);
\draw [thick,->] (0.5,0.5)--(0.3,0.3);
\draw [ultra thick, dashed] (0,0.7) -- (0.7,0);
\node at (0.5, 0.5) {$\bullet$};
\node at (0.5, -0.2) {\footnotesize 0.05};

\draw [thick] (2,0) -- (3,0);
\draw [thick] (3,1) -- (3,0);
\draw [thick] (2,1) -- (3,1);
\draw [thick] (2,0) -- (2,1);
\draw [ultra thick] (2.1,1) -- (3,0.1);
\node at (2.5, -0.2) {\footnotesize 0.0490};
\end{tikzpicture}
\caption{ Left: Best partition into two sets of equal volume and illustration of one-parameter family of partitions obtained from moving hyperplane along diagonal. Right: The partition of this family with the smallest expected discrepancy.}\label{fig:model} 
\end{figure}
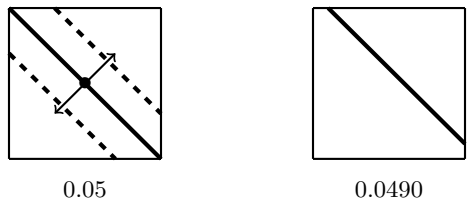

\subsection{Results.} \label{sec:results}
We fully answer Problem \ref{p1} providing two different characterisations for the set $\cS(N, d)$.
First, we use a result from \cite{cho} to derive an algebraic characterisation.

For $d \geq 2$, $0 \leq r \leq d$ and $1 \leq k \leq d$, we define $$f(k):= \frac{1}{d!}\sum_{j=0}^{k-1} (-1)^j \binom{d}{j} (k-j)^d$$ and set $f(0) = 0$.

\begin{theorem} \label{algebraicsol}
For arbitrary $r \in [0,d]$ and corresponding positive halfspace $H^+_r$, define $V_d^{+}: [0,d] \rightarrow [0,1]$ with
$$V_{d}^+(r) = \textnormal{vol} \left( [0,1]^d \cap H^+_r \right).$$ 
For arbitrary $V \in [0,1]$ let the integer $1\leq k \leq d$ be such that $f(k-1) \leq V \leq f(k)$.
The parameter $r$ of the positive half space $H_r^+$ such that $V_{d}^+(r)=V$ is a solution of the polynomial equation:
\begin{equation*}
		\frac{1}{d!} \sum_{j=0}^{k-1} (-1)^j \binom{d}{j} (d-j-r)^d = V_{d}^+(r).
\end{equation*}
\end{theorem}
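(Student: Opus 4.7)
The plan is to reduce the problem to a classical volume formula by exploiting the central symmetry of the cube. Define the complementary slice volume
$$V_d^-(s) := \textnormal{vol}\bigl([0,1]^d \cap H^-_s\bigr).$$
The measure-preserving involution $\bx \mapsto \mathbf{1} - \bx$ on $[0,1]^d$ sends the halfspace $H^+_r$ to $H^-_{d-r}$, which gives the symmetry relation
$$V_d^+(r) = V_d^-(d - r).$$
Hence it suffices to establish an explicit polynomial expression for $V_d^-$.

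For this I would invoke the classical Irwin--Hall formula, which is the cited result from \cite{cho} (equivalently, the CDF of a sum of $d$ i.i.d.\ uniform random variables on $[0,1]$). It follows from inclusion--exclusion on the upper-cube constraints $x_i \leq 1$ applied to the dilated standard simplex $\{\bx \geq \mathbf{0} : x_1 + \ldots + x_d \leq s\}$ of volume $s^d/d!$, producing
$$V_d^-(s) = \frac{1}{d!} \sum_{j=0}^{d} (-1)^j \binom{d}{j} (s - j)_+^d,$$
where $(t)_+ := \max(t, 0)$. For $s \in [k-1, k]$ with $1 \leq k \leq d$, every summand with index $j \geq k$ vanishes, so the sum truncates at $j = k-1$. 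Substituting $s = d - r$ yields
$$V_d^+(r) = \frac{1}{d!} \sum_{j=0}^{k-1} (-1)^j \binom{d}{j} (d - r - j)^d$$
whenever $d - r \in [k-1, k]$.

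Finally, I would verify that the hypothesis $f(k-1) \leq V \leq f(k)$ selects exactly this range of $r$. By construction $f(k) = V_d^-(k)$, and $V_d^-$ is strictly increasing on $[0, d]$. Hence the chain of inequalities $f(k-1) \leq V = V_d^+(r) = V_d^-(d - r) \leq f(k)$ is equivalent to $d - r \in [k-1, k]$, which is precisely where the truncated expression above is valid, completing the argument. The main obstacle is really just invoking the Irwin--Hall identity; beyond that the work is careful bookkeeping with the index $k$, with the mild subtlety that at the boundary value $d - r = k$ the summand at $j = k$ equals $0^d = 0$ so truncating (or extending) the sum at this point is harmless.
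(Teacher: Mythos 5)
Your argument is correct, but it reaches the polynomial identity by a genuinely different route than the paper. The paper applies the Cho--Kim vertex-sum formula (its Theorem \ref{thm:formula}) directly to $V_d^+$: Lemma \ref{lem:gen1} identifies which vertices of the cube lie in $H_r^+$ and counts those with a prescribed number of zero coordinates, Lemma \ref{lem:gen2} evaluates the resulting sum at integer offsets to produce the thresholds $f(k)$, and the theorem then follows by grouping the vertex sum according to $|\mathbf{v}_0|$. You instead pass to the complementary slice via the measure-preserving involution $\bx \mapsto \mathbf{1}-\bx$, so that $V_d^+(r)=V_d^-(d-r)$, and identify $V_d^-$ with the Irwin--Hall CDF obtained by inclusion--exclusion on the constraints $x_i\le 1$ over the dilated simplex; the truncation of that sum at $j=k-1$ for $d-r\in[k-1,k]$ is then immediate, and your closing monotonicity argument ($f(k)=V_d^-(k)$ with $V_d^-$ strictly increasing) makes explicit why the hypothesis $f(k-1)\le V\le f(k)$ selects exactly the range of $r$ on which the truncated polynomial is the right one --- a point the paper's proof leaves somewhat implicit (it establishes $\mathcal{V}_k\subset H_r^+$ but does not spell out that no further vertices contribute, which, as you note, is harmless only because the boundary term vanishes). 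Two remarks: your attribution of the Irwin--Hall identity to \cite{cho} is slightly off --- that reference proves the general vertex-sum formula for arbitrary clipping hyperplanes, which only specializes to Irwin--Hall for the all-ones normal vector --- and the paper's route has the advantage of generalizing to other hyperplane orientations, whereas yours is more elementary and dovetails nicely with the probabilistic viewpoint (sums of i.i.d.\ uniforms) that the paper itself adopts later for Theorem \ref{normalconverge}.
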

Solving for $r_i$ with $V_{d}^+(r_i)=1-i/N$ and $1\leq i \leq N-1$ gives an algebraic characterisation of the set $\cS(N, d)$.
However, this description is not very practical and does not yield any insight into the structure of the set $\cS(N,d)$ as explained further in the subsequent sections. Therefore, we derive a second result which gives a probabilistic and more practical albeit approximate characterisation of $\cS(N, d)$. 

	\begin{theorem} \label{normalconverge}
	For $d \geq 2$, let $H_r^- = \{\mathbf{x} \in \mathbb{R}^d : x_1 + \ldots + x_d \leq r\}$ denote the negative half space with parameter $0 \leq r \leq d$ and let $V_{d}^-:[0,d] \rightarrow [0,1]$ with $V_{d}^-(r) = \textnormal{vol}\left( [0,1]^d \cap H_r^-  \right)$. If $\Phi$ is the cumulative distribution function of the standard normal distribution, then
	\begin{equation} \label{normaldistribution}
		V_{d}^-(r) \rightarrow \Phi \left( 2 \sqrt{3d} \left( \frac{r}{d} - \frac{1}{2} \right) \right)
	\end{equation}
	as $d \rightarrow \infty$ for all $r > 0$, i.e., $V_{d}^-$ converges pointwise to $\Phi$.
	\end{theorem}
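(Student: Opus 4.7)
The plan is to reinterpret $V_d^-(r)$ probabilistically and invoke the Central Limit Theorem. Let $X_1, \ldots, X_d$ be i.i.d.\ random variables uniformly distributed on $[0,1]$; then $(X_1, \ldots, X_d)$ is uniform on $[0,1]^d$, so that
$$V_d^-(r) = \PP(X_1 + \cdots + X_d \leq r) = \PP(S_d \leq r),$$
where $S_d := X_1 + \cdots + X_d$. Each $X_i$ has mean $\mu = 1/2$ and variance $\sigma^2 = 1/12$, so by the classical CLT the standardised sum
$$S_d^* := \frac{S_d - d/2}{\sqrt{d/12}}$$
converges in distribution to a standard normal $\mathcal{N}(0,1)$.

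The subtlety is that we do not want to evaluate the limit at a fixed point: $V_d^-(r) = \PP(S_d^* \leq x_d)$ where the threshold $x_d = (r - d/2)/\sqrt{d/12}$ depends on $d$. A direct elementary computation gives
$$x_d = \sqrt{12/d}\,(r - d/2) = \sqrt{12d}\,(r/d - 1/2) = 2\sqrt{3d}\,(r/d - 1/2),$$
which matches the argument of $\Phi$ in the statement. So I need to show $\PP(S_d^* \leq x_d) - \Phi(x_d) \to 0$ even though $x_d$ moves with $d$ (indeed, for fixed $r>0$ one has $x_d \to -\infty$).

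Pointwise convergence of CDFs at a fixed point is not enough here, but since the limiting distribution $\Phi$ is continuous on $\RR$, Pólya's theorem upgrades convergence in distribution to \emph{uniform} convergence of the distribution functions:
$$\sup_{x \in \RR}\bigl|F_{S_d^*}(x) - \Phi(x)\bigr| \to 0 \quad \text{as } d \to \infty.$$
Evaluating this supremum bound at $x = x_d$ gives $|V_d^-(r) - \Phi(x_d)| \to 0$, which is the desired pointwise convergence for every $r > 0$.

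The main (and essentially only) obstacle is handling the moving argument $x_d$, which rules out a naive pointwise CLT application; Pólya's theorem resolves this cleanly. If one prefers a quantitative statement, the Berry--Esseen inequality can be used instead, yielding the explicit rate $|V_d^-(r) - \Phi(x_d)| = O(d^{-1/2})$ (the constant depending only on the third absolute moment of a centred uniform variable), which is more than what is required for the pointwise convergence claimed in the theorem.
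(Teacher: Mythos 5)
Your proposal is correct and follows essentially the same route as the paper: interpret $V_d^-(r)$ as $\PP(X_1+\cdots+X_d\le r)$ for i.i.d.\ uniform variables and apply the CLT to the standardised sum $2\sqrt{3d}\,(\tfrac{1}{d}S_d-\tfrac12)$. The one point where you are more careful than the paper is the $d$-dependent evaluation point $x_d=2\sqrt{3d}(r/d-1/2)$: the paper's main proof simply substitutes it into the pointwise CLT limit, whereas you correctly note that this requires uniform convergence of the CDFs (P\'olya's theorem) or a quantitative bound such as Berry--Esseen --- the latter being exactly what the paper defers to its appendix to make the convergence rate explicit.
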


To get the desired characterisation of the set $\mathcal{S}(N, d)$ containing the values $r_i$ such that $V_{d}^+(r_i) = \textnormal{vol} \left(H_{r_i}^+ \cap [0,1]^d \right) = 1-i/N$ for $1 \leq i \leq N-1$, we
apply the well known symmetry of the quantile function, 
\begin{equation} \label{phisym}
	\Phi^{-1}(x) = -\Phi^{-1}(1-x)
\end{equation} 
for $x \in [0,1]$.
Hence using the relation $V_{d}^-(r) = 1-V_{d}^+(r) = i/N$, we can deduce that for $1 \leq i \leq N-1$
\begin{equation*}
	r_i \approx \frac{d}{2} + \frac{\sqrt{d}}{2\sqrt{3}} \Phi^{-1}\left( \frac{i}{N} \right)
\end{equation*}
as $d \rightarrow \infty$.
Therefore with the above observations, Theorem \ref{normalconverge} gives the following probabilistic characterisation:
\begin{equation}\label{prob} 
\cS(N,d) \approx \left \{ \frac{d}{2} + \frac{\sqrt{d}}{2\sqrt{3}}\Phi^{-1}\left(\frac{i}{N} \right) : i=1, \ldots, N-1 \right\}.
\end{equation}

As a third result in order to tackle Problem \ref{p2}, we provide extensive numerical results on the set $\cS^{*}(N, d)$ in Section 6 for $d=2,3$. Using black-box optimisers, we are able to approximate the sets $\cS^{*}(N,2)$ and $\cS^{*}(N,3)$ for low values of $N$. We observe that, in two dimensions the equivolume stratification $\cS(N,2)$ improves the uniformly distributed random point sets by a factor of 2 for the $\mathcal{L}_2$-discrepancy, in line with results in \cite{mf21}, and approximations of $\cS^{*}(N,2)$ improve $\cS(N,2)$ by up to 8\% for the smallest values of $N$. In three dimensions, we see that approximations of $\cS^{*}(N,3)$ provide a slightly larger improvement of up to 10\% over the equivolume stratification.

\subsection{Outline.} Section \ref{sec:prelim} recalls important results and introduces further notation.
In Section \ref{sec:special} we illustrate the algebraic method in the special case of $d=3$, outlining the difficulties that occur already in very small dimensions. In Section \ref{sec:general}, we prove Theorem \ref{algebraicsol} while Theorem \ref{normalconverge} is proved in Section \ref{sec:normalDist}. The final section contains numerical results on the minimal sets $\cS^*(N,d)$.


\section{Preliminaries}
\label{sec:prelim}

It is a computationally very hard problem to calculate the volume of a convex polytope \cite{dyer} which inspired much research on approximation methods as well as exact calculations. In \cite{cho} the authors study the special case in which a polytope is obtained from a hypercube by clipping hyperplanes. The resulting formulas do not require heavy machinery but are technically complicated as much as they are intriguing.

For our note, the simple case of a hypercube clipped by only one hyperplane is of interest.
Our starting point is an interesting formula whose main idea can be traced back to a paper of Polya \cite{polya} and which seems to have appeared for the first time in \cite{barrow}. We refer to the introduction of \cite{cho} for more information.

\begin{theorem} {\cite[Theorem 1]{cho}} \label{thm:formula}
	Let $H^+_r$ be the halfspace defined by
	\begin{equation*} 
		\{\mathbf{x} \in \mathbb{R}^d | g(\bx):=\mathbf{a} \cdot \bx - r = a_1x_1 + a_2x_2 + \ldots + a_d x_d - r \geq 0 \},
	\end{equation*}
	with $\prod_{t=1}^d a_t \neq 0$. Then we have
	\begin{equation} \label{eqn1} 
		\textnormal{vol}\left( [0,1]^d \cap H^+_r \right) = \sum_{\mathbf{v} \in F^0 \cap H^+_r} \frac{(-1)^{ |\mathbf{v}_0|} g(\mathbf{v})^d}{d! \prod_{t=1}^d a_t}, 
	\end{equation}
	in which $F^0$ corresponds to the set of vertices of the cube, and $|\mathbf{v}_0|$ counts the number of zeros in the vector $\mathbf{v}$.
\end{theorem}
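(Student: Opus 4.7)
The plan is to interpret \eqref{eqn1} as an inclusion--exclusion formula for the volume of an affine simplex clipped by the cube, after first reflecting so as to work with a negative half space.

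\textbf{Reduction and reflection.} First reduce to the case where every $a_t > 0$: if some $a_t<0$, substitute $x_t\mapsto 1-x_t$, which preserves $[0,1]^d$, flips the sign of $a_t$, and adjusts $r$ by $-a_t$; each cube vertex $\mathbf{v}$ is bijected to its coordinate-reflection with $|\mathbf{v}_0|$ preserved, so both sides of \eqref{eqn1} transform consistently. Iterating, assume $a_t>0$ for all $t$. Next, apply the global substitution $\mathbf{y}=\mathbf{1}-\mathbf{x}$: the set $[0,1]^d\cap H^+_r$ is mapped to $[0,1]^d\cap\{\mathbf{a}\cdot\mathbf{y}\le r'\}$ with $r':=\sum_t a_t-r$, so it suffices to prove the analogous formula for $H^-_{r'}$ and then convert back.

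\textbf{Inclusion--exclusion on the simplex.} With all $a_t>0$, the set $T:=\{\mathbf{y}\ge\mathbf{0}:\mathbf{a}\cdot\mathbf{y}\le r'\}$ is an affine simplex with volume $(r')_+^d/(d!\prod_t a_t)$ (via the change $z_i=a_iy_i$ to the standard simplex). Since $T\subseteq\mathbb{R}^d_{\ge 0}$, one has $T\cap[0,1]^d=T\setminus\bigcup_{i=1}^d\{y_i>1\}$, and inclusion--exclusion gives
\begin{equation*}
\textnormal{vol}(T\cap[0,1]^d)=\sum_{S\subseteq\{1,\ldots,d\}}(-1)^{|S|}\,\textnormal{vol}\!\left(T\cap\bigcap_{i\in S}\{y_i>1\}\right).
\end{equation*}
Translating $y_i\mapsto y_i+1$ for $i\in S$ turns each intersected piece into a shifted simplex of volume $\bigl(r'-\sum_{i\in S}a_i\bigr)_+^d/(d!\prod_t a_t)$. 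The subset $S$ corresponds bijectively to the cube vertex $\mathbf{v}_S\in F^0$ with $i$th coordinate $1$ iff $i\in S$, and the summand is nonzero precisely when $\mathbf{v}_S\in H^-_{r'}$.

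\textbf{Back-substitution.} Finally apply the bijection $\mathbf{w}=\mathbf{1}-\mathbf{v}_S$ on $F^0$: one checks that $\mathbf{v}_S\in H^-_{r'}\iff\mathbf{w}\in H^+_r$, that $|S|$ (the number of ones in $\mathbf{v}_S$) equals the number of zeros $|\mathbf{w}_0|$ of $\mathbf{w}$, and that $r'-\mathbf{a}\cdot\mathbf{v}_S=\mathbf{a}\cdot\mathbf{w}-r=g(\mathbf{w})$. Substituting makes each summand equal $(-1)^{|\mathbf{w}_0|}g(\mathbf{w})^d/(d!\prod_t a_t)$ and re-indexes the sum over $\mathbf{w}\in F^0\cap H^+_r$, which is exactly \eqref{eqn1}. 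The main hurdle here is purely bookkeeping: one must verify that the parity $|S|$ turns into $|\mathbf{w}_0|$ and that the shifted-simplex height $(r'-\mathbf{a}\cdot\mathbf{v}_S)^d$ matches $g(\mathbf{w})^d$ without leaving stray factors of $(-1)^d$ behind.
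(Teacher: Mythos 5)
The paper does not prove this statement at all: Theorem \ref{thm:formula} is quoted verbatim from \cite[Theorem 1]{cho} and used as a black box, so there is no internal proof to compare your argument against. Judged on its own merits, your overall strategy --- reduce to positive coefficients, reflect to the negative half space, realise the clipped region as a simplex minus the slabs $\{y_i>1\}$, apply inclusion--exclusion so that each term is a translated simplex of volume $\bigl(r'-\sum_{i\in S}a_i\bigr)_+^d/(d!\prod_t a_t)$, and re-index over cube vertices via $\mathbf{w}=\mathbf{1}-\mathbf{v}_S$ --- is sound and does yield the formula.

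There is, however, one concrete error in the reduction step. Under the single-coordinate reflection $x_t\mapsto 1-x_t$, a vertex $\mathbf{v}$ has its $t$-th entry swapped between $0$ and $1$, so $|\mathbf{v}_0|$ changes by exactly $\pm 1$: it is \emph{not} preserved, and its parity flips. If $|\mathbf{v}_0|$ really were preserved while $\prod_t a_t$ changes sign (which it does, since $a_t\mapsto -a_t$), the two sides of \eqref{eqn1} would transform \emph{inconsistently} and your reduction would fail. The correct bookkeeping is that the sign flip of $(-1)^{|\mathbf{v}_0|}$ cancels against the sign flip of $\prod_t a_t$, while $g(\mathbf{v})$ and the membership $\mathbf{v}\in H^+_r$ are preserved, so each summand is invariant. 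With that correction the remainder of your argument --- the inclusion--exclusion identity, the translated-simplex volumes, and the final checks $|S|=|\mathbf{w}_0|$ and $r'-\mathbf{a}\cdot\mathbf{v}_S=g(\mathbf{w})$ --- is correct, including the harmless ambiguity for vertices lying exactly on the hyperplane, whose terms vanish on both sides.
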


Our problem is in a way the inverse of the main problem studied in \cite{cho}. We know the volume of the intersection of a particular half space $H^+_r$ with the unit cube (and we know the normal vector of the hyperplane) and we are interested in the offset $r$. Thus, we can use Theorem \ref{thm:formula} to obtain an expression for the parameter $r$ given the volume of the intersection of the cube with the hyperplane.
To illustrate the theorem we look again at the two-dimensional case. For $i\leq N/2$ we have that $\textnormal{vol}([0,1]^2 \cap H^+_{r_i} )=1-i/N$. Moreover, we know that the three vertices $(1,0),(0,1)$ and $(1,1)$ of the unit square are contained in $[0,1]^2 \cap H^+_{r_i} $. Hence, we obtain
\begin{equation*}
1 - \frac{i}{N} = -\frac{(1-r_i)^2}{2} -\frac{(1-r_i)^2}{2} + \frac{(2 - r_i)^2}{2},
\end{equation*}
which can be simplified to
$$ r_i^2 = \frac{2i}{N}.$$
This coincides, up to the choice of the correct sign, with the result we already derived in the introduction. Note that even in the two-dimensional case we see that solving the problem algebraically requires us to pick our desired solution from a range of possibilities.


\section{The special case $d=3$} 
\label{sec:special}

To further illustrate the algebraic method, we explicitly solve the special case $d=3$. This case is still easy to visualise and already shows the difficulties we are facing beyond the two-dimensional case.
Let $[0,1]^3$ be the three-dimensional unit cube with vertices labelled $\mathbf{v}_1 = (0,0,0)$, $\mathbf{v}_2 = (1,0,0)$, $\mathbf{v}_3 = (0,1,0)$, $\mathbf{v}_4 = (0,0,1)$, $\mathbf{v}_5 = (1,1,0)$, $\mathbf{v}_6 = (1,0,1)$, $\mathbf{v}_7 = (0,1,1)$ and $\mathbf{v}_8 = (1,1,1)$. We ask for a characterisation of the set $\cS(N,3)$.
To be more precise, every point on a hyperplane $H_{r_i}$ satisfies
$$x_1 + x_2 + x_3 = r_i$$
and we are interested in the points $r_i$ such that $\textnormal{vol}([0,1]^3 \cap H^+_{r_i} )=1-i/N$ for $1\leq i \leq N-1$.

The following lemma characterises the points $r_i$ as roots of polynomials of degree 3.

\begin{lemma} \label{lem:r32}
For arbitrary $r \in [0,3]$ and corresponding positive halfspace $H^+_r$, define $V_{3}^+: [0,3]\rightarrow [0,1]$ with
$$V_{3}^+(r) := \textnormal{vol} \left( [0,1]^3 \cap H^+_r \right).$$ 
Fixing a value $V \in [0,1]$, the parameter value $r$ such that $V_{3}^+(r) = V$
can be obtained as a solution of the corresponding polynomial equation:
	\begin{equation*}
		0 = \begin{cases}
			(3-r)^3 - 6V & \textnormal{if } 0 \leq V< \frac{1}{6}; \\
			(3-r)^3 - 3(2-r)^3 - 6V & \textnormal{if } \frac{1}{6} \leq V < \frac{5}{6}; \\
			(3-r)^3 - 3(2-r)^3 + 3(1-r)^3 - 6V & \textnormal{if } \frac{5}{6} \leq V < 1.
		\end{cases}
	\end{equation*}
\end{lemma}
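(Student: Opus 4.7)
The plan is to apply Theorem \ref{thm:formula} directly, taking $d=3$ and $\mathbf{a}=(1,1,1)$ so that $g(\mathbf{v}) = v_1 + v_2 + v_3 - r$. With this normal vector the product $\prod_t a_t = 1$ and the formula reduces to $V_3^+(r) = \tfrac{1}{6}\sum_{\mathbf{v}\in F^0\cap H_r^+}(-1)^{|\mathbf{v}_0|}\,g(\mathbf{v})^3$. The only thing that changes as $r$ moves across $[0,3]$ is which cube vertices lie in $H_r^+$, which depends purely on the coordinate sum $v_1+v_2+v_3\in\{0,1,2,3\}$.

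First I would split $[0,3]$ into the three natural ranges determined by those coordinate sums. For $r\in[2,3]$, only $\mathbf{v}_8=(1,1,1)$ has sum $\geq r$; it contributes a single term with $|\mathbf{v}_0|=0$, giving $V_3^+(r)=(3-r)^3/6$. For $r\in[1,2]$, the vertices $\mathbf{v}_5,\mathbf{v}_6,\mathbf{v}_7$ (sum $2$, each with one zero coordinate) join $\mathbf{v}_8$, yielding $V_3^+(r)=\tfrac{1}{6}\bigl((3-r)^3 - 3(2-r)^3\bigr)$. Finally, for $r\in[0,1]$, the vertices $\mathbf{v}_2,\mathbf{v}_3,\mathbf{v}_4$ (sum $1$, each with two zeros) are added with sign $(-1)^2=+1$, yielding $V_3^+(r)=\tfrac{1}{6}\bigl((3-r)^3 - 3(2-r)^3 + 3(1-r)^3\bigr)$.

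Next I would identify the breakpoints in the variable $V$. Since $V_3^+$ is continuous and strictly decreasing in $r$, the regimes in $r$ correspond to intervals in $V$: evaluating at $r=2$ gives $V=\tfrac{1}{6}$ and at $r=1$ gives $V=\tfrac{5}{6}$ (using that $\tfrac{1}{6}(2^3-3\cdot 1^3)=\tfrac{5}{6}$). This exactly matches the three cases in the statement. Setting $V_3^+(r)=V$ and moving $6V$ to the other side gives the three polynomial equations to be solved.

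The main obstacle, such as it is, is the combinatorial bookkeeping of which of the eight vertices belong to $H_r^+$ in each regime and the correct power of $-1$ from the number of zero coordinates. There is no real analytical difficulty; the result is essentially a direct specialization of Theorem \ref{thm:formula}. One small comment worth making in the write-up is that although each polynomial equation has three complex roots, the geometrically correct $r$ is the unique real root lying in the appropriate subinterval $[2,3]$, $[1,2]$ or $[0,1]$, mirroring the sign-selection issue already encountered in the two-dimensional example of Section~\ref{sec:prelim}.
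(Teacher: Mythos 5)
Your proposal is correct and follows essentially the same route as the paper's proof: a direct application of Theorem \ref{thm:formula} with $\mathbf{a}=(1,1,1)$, identifying which vertices lie in $H_r^+$ in each of the three regimes and noting the breakpoints $V_3^+(2)=\tfrac{1}{6}$ and $V_3^+(1)=\tfrac{5}{6}$. Your added remark about selecting the unique real root in the correct subinterval is a sensible clarification but does not change the argument.
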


\begin{proof} 
We use Theorem \ref{thm:formula} and consider the three cases separately. We must first remark several important facts that will be useful in this derivation. It can be seen with some small calculations (via Theorem \ref{thm:formula}) that when $r=1$ and $r=2$, the volume of intersection of $[0,1]^3 \cap H_r^+$ is $5/6$ and $1/6$ respectively. Hence, for example, the range of parameter $0 <r \leq 1$ corresponds to a range of volume of intersection $\frac{5}{6} \leq V <1$.

In the first instance, let $0 \leq V < \frac{1}{6}$ and we see that $F^0 \cap H_r^+ = \{\mathbf{v}_8\}$. Therefore, $g(\mathbf{v}_8) = (1,1,1)\cdot(1,1,1)-r = 3-r$, and we obtain
\begin{equation*}
		V = \frac{(-1)^0 (3-r)^3}{3! \cdot 1}
	\end{equation*}
	which can be rewritten as
	\begin{equation*}
		0 = (3-r)^3 - 6V.
	\end{equation*}
 For the second case, if $\frac{1}{6} \leq V \leq \frac{5}{6}$, then $F^0 \cap H_r^+ = \{\mathbf{v}_5, \mathbf{v}_6, \mathbf{v}_7,\mathbf{v}_8\}$. Since the vectors $\mathbf{v}_5, \mathbf{v}_6$ and $\mathbf{v}_7$ all contain one zero, we have that $g(\mathbf{v}_i) = 2 - r$ for $i = 5, 6, 7.$ Hence we get, 
	\begin{equation*}
		V = \frac{(-1)^0 (3-r)^3}{3! \cdot 1} + 3 \frac{(-1)^1 (2-r)^3}{3! \cdot 1}
	\end{equation*}
or more simply
	\begin{equation*}
		0 = (3-r)^3 - 3(2-r)^3 - 6V.
	\end{equation*}
Finally, for $\frac{5}{6} \leq V < 1$ we have all the vertices in the intersection $F^0 \cap H_r^+$ except $\mathbf{v}_1$. Note that the vectors $\mathbf{v}_2, \mathbf{v}_3$ and $\mathbf{v}_4$ all contain two zero components hence $g(\mathbf{v}_i) = 1-r$ for $i=2, 3, 4$. Therefore,
	\begin{equation*}
			V = \frac{(-1)^0 (3-r)^3}{3! \cdot 1} + 3 \frac{(-1)^1 (2-r)^3}{3! \cdot 1} + 3 \frac{(-1)^2 (1-r)^3}{3! \cdot 1}
	\end{equation*}
	which can be re written as, 
	\begin{equation*}
		0 = (3-r)^3 - 3(2-r)^3 + 3(1-r)^3 - 6V
	\end{equation*}
	as required.
\end{proof}

Using the last Lemma, we obtain the following full characterisation of $r$ for a given $V$:
\begin{equation*}
r= \begin{cases}
3-\sqrt[3]{6V}  & \textnormal{if } 0 \leq V < \frac{1}{6}; \\
\frac{3}{2}+\frac{1}{4} \sqrt[3]{3} \left(1+i \sqrt{3}\right) \sqrt[3]{C+4 V -2}+\frac{3^{2/3} \left(1-i \sqrt{3}\right)}{4\sqrt[3]{C+4 V -2}}		 & \textnormal{if } \frac{1}{6} \leq V< \frac{5}{6}; \\
\sqrt[3]{6-6V}			 & \textnormal{if } \frac{5}{6} \leq V < 1,
\end{cases}
\end{equation*}
in which $C=\sqrt{16 V^2-16 V +1}$.
The resulting function is illustrated in Figure \ref{fig:plot}.

\begin{figure}
\includegraphics[width=9cm]{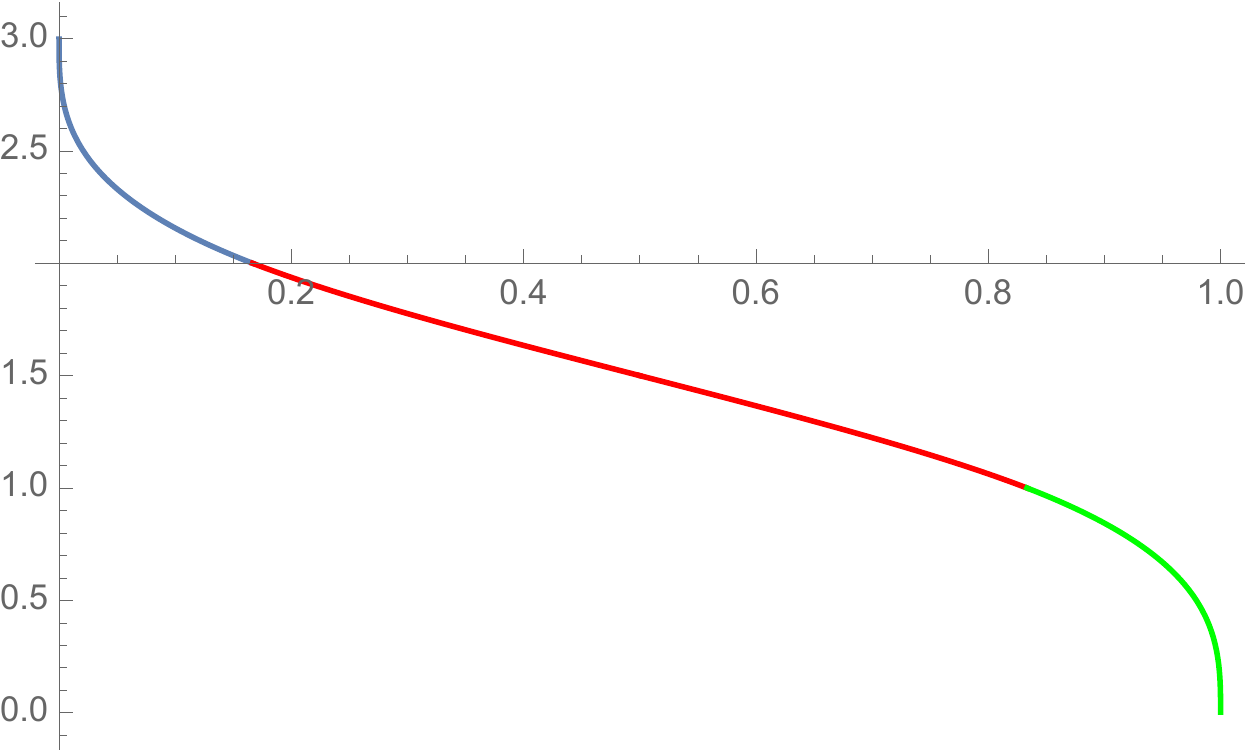}
\caption{ The $x$-axis contains the values of $V$, while the $y$-axis shows the corresponding $r$ for a given volume $V$. The three different colors illustrate the three different cases. \label{fig:plot}}
\end{figure}


\section{The general case} 
\label{sec:general}

We now proceed to the general $d-$dimensional case while emphasising that one of the key ingredients of the formula in Theorem \ref{thm:formula} is to determine which vertices lie inside the half space and subsequently counting how many of these vertices contain $k$ zeros, with $1 \leq k \leq d$. In the context of our problem, i.e. when considering hyperplanes orthogonal to the main diagonal, this can be done in a simple manner that is summarised below in Lemma \ref{lem:gen1}. 

\begin{lemma} \label{lem:gen1}
For $d \geq 2$, let $H_r^+$ be the positive half space for $r$ with $0 < r \leq d$.
Let $F^0$ denote the set of vertices of the $d-$dimensional hypercube $[0,1]^d$. 
For an integer $1\leq k \leq d$, let $\mathcal{V}_k := \{\mathbf{v} \in F^0 : \left| \mathbf{v}_0 \right| < k\}$ where $|\mathbf{v}_0|$ counts the number of zeros in a vector $\mathbf{v}$. 
Then, for every $k \leq \lceil d-r \rceil$ we have that
\begin{equation*}
		\mathcal{V}_k \subset H_r^+.
\end{equation*}
Moreover, $\left| \mathcal{V}_k \setminus \mathcal{V}_{k-1} \right| =  \binom{d}{k-1}.$
\end{lemma}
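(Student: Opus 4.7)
The plan is to exploit the very simple structure of the vertices of the cube $[0,1]^d$: each coordinate of a vertex $\mathbf{v} \in F^0$ is either $0$ or $1$, so if $|\mathbf{v}_0| = j$, then $v_1 + \cdots + v_d = d - j$. Thus membership in $H_r^+$ reduces to a condition on the number of zeros alone.

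First I would observe that $\mathbf{v} \in H_r^+$ is equivalent to $d - |\mathbf{v}_0| \geq r$, that is, $|\mathbf{v}_0| \leq d - r$. Now every $\mathbf{v} \in \mathcal{V}_k$ satisfies $|\mathbf{v}_0| \leq k-1$, so to conclude $\mathcal{V}_k \subset H_r^+$ it suffices to verify the integer inequality $k - 1 \leq d - r$. The hypothesis is $k \leq \lceil d - r \rceil$, which I would split into the two trivial cases: if $d - r \in \mathbb{Z}$ then $\lceil d-r \rceil = d-r$ and $k - 1 \leq d-r-1 \leq d-r$; otherwise $\lceil d-r \rceil = \lfloor d-r \rfloor + 1$ and $k - 1 \leq \lfloor d - r \rfloor \leq d - r$. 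In either case the bound holds, so every vertex of $\mathcal{V}_k$ lies in $H_r^+$.

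For the second assertion, I would simply identify the set $\mathcal{V}_k \setminus \mathcal{V}_{k-1}$: by its very definition it consists of those vertices $\mathbf{v} \in F^0$ with $|\mathbf{v}_0| < k$ but not $|\mathbf{v}_0| < k-1$, i.e., exactly $|\mathbf{v}_0| = k-1$. Such a vertex is determined by choosing which $k-1$ of the $d$ coordinates are $0$ (the remaining ones are forced to be $1$), giving $\binom{d}{k-1}$ vertices.

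There is no real obstacle here; the only mildly delicate point is handling the ceiling, which is why I would explicitly separate the integer and non-integer cases for $d - r$ so that the proof is airtight.
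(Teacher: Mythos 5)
Your proof is correct and follows essentially the same route as the paper: reduce membership of a vertex in $H_r^+$ to a bound on its number of zeros, deduce the required inequality from $k \leq \lceil d-r\rceil$, and count $\mathcal{V}_k\setminus\mathcal{V}_{k-1}$ as the vertices with exactly $k-1$ zero coordinates. Your explicit case split on whether $d-r$ is an integer is just a more careful unpacking of the paper's one-line implication $k \leq \lceil d-r\rceil \Rightarrow r \leq d-k+1$; no substantive difference.
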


\begin{proof}
Let $1 \leq k \leq d$. By definition, a vertex $\mathbf{v}=(v_1, \ldots, v_d)$ is in $\mathcal{V}_k$ if and only if $\mathbf{v}$ contains at most $k-1$ zeros. Hence this vertex contains at least $d-(k-1)$ components equal to one. Therefore given $k$ such that $k \leq \lceil d-r \rceil$ and a vector $\mathbf{v} \in \mathcal{V}_k$,
\begin{eqnarray}
v_1 + v_2 + \cdots + v_d - r &\geq& d - (k-1) - r \nonumber \\ &\geq& d-(k-1) - (d - k + 1) = 0 \nonumber
\end{eqnarray}
since $k \leq \lceil d-r \rceil \Rightarrow r \leq d - k + 1$. That is, $\mathbf{v} \in H_r^+$ and hence $\mathcal{V}_k \subset H_r^+$ as required.
Finally, there are ${d \choose k-1}$ vertices containing exactly $k-1$ ones and hence by definition these are precisely the elements in the difference $\mathcal{V}_k \setminus \mathcal{V}_{k-1}$.
\end{proof}

Using the previous result, we now give a closed form expression for the volume of the intersection of $[0,1]^d$ and a positive halfspace $H_r^+$ for an integer $r$ with $0 \leq r \leq d$ which will be used to calculate the critical values of the volume of intersection, $V^+_d(r)$, determining the form of our algebraic equation in variable $r$.

\begin{lemma} \label{lem:gen2}
For $d \geq 2$, let $r = d-k$ be an integer with $1 \leq k \leq d$ and let $H_r^+$ be the corresponding positive half space. Define $V_d^+: [0,d] \rightarrow [0, 1]$ with $$V_d^+(r) \coloneqq \textnormal{vol} \left( [0, 1]^d \cap H_r^+ \right).$$ Then
$$V_d^+(r) = \frac{1}{d!}\sum_{j=0}^{k-1} (-1)^j \binom{d}{j} \left( k-j  \right)^d.$$
\end{lemma}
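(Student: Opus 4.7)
The plan is to apply Theorem \ref{thm:formula} directly with the normal vector $\mathbf{a}=(1,1,\ldots,1)$, so that $g(\mathbf{x})=x_1+\cdots+x_d-r$ and $\prod_{t=1}^d a_t=1$. The formula then reduces to
\[
V_d^+(r) \;=\; \frac{1}{d!}\sum_{\mathbf{v}\in F^0\cap H_r^+} (-1)^{|\mathbf{v}_0|}\, g(\mathbf{v})^d,
\]
so the whole task becomes identifying which vertices contribute and evaluating $g$ on them.

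First I would classify the vertices of $[0,1]^d$ by the number of zero coordinates. A vertex $\mathbf{v}\in F^0$ with $|\mathbf{v}_0|=j$ has $v_1+\cdots+v_d=d-j$, hence lies in $H_r^+$ iff $d-j\geq r=d-k$, i.e. iff $j\leq k$. Moreover, for such a vertex $g(\mathbf{v})=(d-j)-(d-k)=k-j$, and there are exactly $\binom{d}{j}$ such vertices. This is where I invoke Lemma \ref{lem:gen1}: for the integer value $r=d-k$ it gives $\mathcal{V}_k\subset H_r^+$, covering all vertices with $j\in\{0,1,\ldots,k-1\}$ zeros, and the counting statement $|\mathcal{V}_k\setminus\mathcal{V}_{k-1}|=\binom{d}{k-1}$ confirms the multiplicities.

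The one subtlety to address is the vertices with exactly $j=k$ zeros. These satisfy $v_1+\cdots+v_d=d-k=r$, so they lie on the boundary hyperplane $H_r$ and are in $H_r^+$, but for them $g(\mathbf{v})=0$ and hence $g(\mathbf{v})^d=0$. Therefore they contribute nothing to the sum and can be discarded. This boundary case is the only place where care is needed; once it is dispatched, the remaining vertices are precisely those with $0\leq j\leq k-1$ zeros.

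Collecting the contributions by grouping vertices according to their number of zeros $j$ and using $(-1)^{|\mathbf{v}_0|}=(-1)^j$ gives
\[
V_d^+(r) \;=\; \frac{1}{d!}\sum_{j=0}^{k-1} \binom{d}{j}(-1)^j (k-j)^d,
\]
which is the claimed identity. I expect no genuine obstacle here beyond bookkeeping; the proof is essentially a careful application of Theorem \ref{thm:formula} combined with the combinatorial classification of vertices in Lemma \ref{lem:gen1}, with the boundary-vertex check being the only nontrivial point worth mentioning explicitly.
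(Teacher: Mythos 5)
Your proof is correct and follows essentially the same route as the paper's: apply Theorem~\ref{thm:formula} with $\mathbf{a}=(1,\ldots,1)$, group the vertices of $F^0\cap H_r^+$ by their number of zero coordinates, and count $\binom{d}{j}$ vertices in each class before re-indexing the sum. If anything you are slightly more careful than the paper, since you explicitly observe that the vertices with exactly $k$ zeros lie on the hyperplane $H_r$ and contribute $g(\mathbf{v})^d=0$, a boundary case the paper's proof passes over silently by summing only over $\mathcal{V}_k$.
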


\begin{proof}
By definition, every vertex $\mathbf{v}$ of $[0,1]^d$ has $d - \left| \mathbf{v}_0 \right|$ components equal to one with the rest equalling zero. Hence from the formula in Theorem \ref{thm:formula}, with $g(\mathbf{v}) = d - \left| \mathbf{v}_0 \right| - r$
\begin{equation} \label{eqn:ver2}
		\textnormal{vol} \left( [0,1]^d \cap H^+_r \right) = \sum_{\mathbf{v} \in F^0 \cap H_d^+} \frac{(-1)^{|\mathbf{v}_0|} (d- \left| \mathbf{v}_0 \right| - r)^d}{d!}.
\end{equation}
Now given $0 \leq k \leq d$, let $r=d-k$ which implies $\mathcal{V}_k \subset H_r^+$, from Lemma \ref{lem:gen1}. Moreover, we observe that for every $i \leq k$, $\mathbf{v} \in \mathcal{V}_i \setminus \mathcal{V}_{i-1}$ implies $\left| \mathbf{v}_0 \right| = i-1$ and $\left| \mathcal{V}_i \setminus \mathcal{V}_{i-1} \right| = \binom{d}{i-1}$. Therefore we obtain a closed formula for the volume of intersection for each integer $0 \leq r \leq d$ as follows
\begin{eqnarray}
\textnormal{vol} \left( [0,1]^d \cap H^+_r\right) &=& \sum_{\mathbf{v} \in F^0 \cap H_r^+} \frac{(-1)^{|\mathbf{v}_0|} (d- \left| \mathbf{v}_0 \right| -r)^d}{d!} \nonumber  \\ 
&=& \sum_{i=1}^k \frac{ (-1)^{i-1} \binom{d}{i-1} \left( d-(i-1)-r \right)^d}{d!}  \nonumber \\ 
&=& \sum_{i=1}^k \frac{(-1)^{i-1} \binom{d}{i-1} \left( d-(i-1)-(d-k) \right)^d}{d!} \nonumber \\ 
&=& \sum_{i=1}^k \frac{(-1)^{i-1} \binom{d}{i-1} \left( k-(i-1)  \right)^d}{d!} \nonumber \\ 
&=& \sum_{j=0}^{k-1} \frac{(-1)^j \binom{d}{j} \left( k-j  \right)^d}{d!} \nonumber
	\end{eqnarray}
\end{proof}

With this, we can finally prove Theorem \ref{algebraicsol}.

\begin{proof}[Proof of Theorem \ref{algebraicsol}]
For a given fixed volume $V \in [0, 1]$, let $1 \leq k \leq d$ be such that $f(k-1) \leq V \leq f(k)$. Let $r$ be such that $V_d^+(r)=V$, then we know from Lemma \ref{lem:gen1} and \ref{lem:gen2} that $\mathcal{V}_k \subset H_r^+$. Applying a similar argument as in Lemma \ref{lem:gen2} we have for every $i \leq k, \mathbf{v} \in \mathcal{V}_i \setminus \mathcal{V}_{i-1}$ implies $|\mathbf{v}_0|=i-1$ and $|\mathcal{V}_i \setminus \mathcal{V}_{i-1}|= \binom{d}{i-1}$ recalling that $|\mathbf{v}_0|$ counts the number of zeros in vector $\mathbf{v}$.
Hence using the formula from Theorem \ref{thm:formula},
\begin{eqnarray}
	V = \textnormal{vol} \left( [0,1]^d \cap H^+_r\right) &=& \sum_{\mathbf{v} \in F^0 \cap H_r^+} \frac{(-1)^{|\mathbf{v}_0|} (d- \left| \mathbf{v}_0 \right| -r)^d}{d!} \nonumber  \\ 
	&=& \sum_{i=1}^k \frac{(-1)^{i-1} \binom{d}{i-1} \left( d-(i-1)-r \right)^d}{d!}. \nonumber
\end{eqnarray}
This can be rearranged to yield
\begin{equation} \label{polyeqn}
		\frac{1}{d!} \sum_{j=0}^{k-1} (-1)^j \binom{d}{j} (d-j-r)^d = V.
\end{equation}
\end{proof}

\begin{remark} \label{rem:comments}

Commenting of the work completed so far, we can now provide a full characterisation of the generating set $\cS(N,d)$ for arbitrary $N$ and $d$ via the solutions to order d polynomial equations of the form \ref{polyeqn}. It is worth noting that the Abel-Ruffini Theorem informs us that given a general polynomial of order (or in our case, dimension) greater than $4$, we are not guaranteed to obtain an exact closed form solution. However, we can of course solve for variable $r$ numerically to arbitrary degree of accuracy using for example Newton's method. As a second remark, the algebraic characterisation unfortunately does not give insight into any underlying structure that the parameter values might hold. Due to these drawbacks, in the next section we derive a second result from which a probabilistic approximation of $\cS(N, d)$ can be stated as in Theorem \ref{normalconverge}.
\end{remark}


\section{Approximation by normal distribution} 
\label{sec:normalDist}

\subsection{Probabilistic characterisation}
The formula in Theorem \ref{algebraicsol} is a precise characterisation of $r$ and lets us plot the graph of $V_{d}^+(r)$ for increasing $r$; i.e. knowing that $r \in [0,d]$ we can explicitly calculate the volume $V_{d}^+(r)$ of the intersection $[0,1]^d \cap H_r^+$.
As pointed out in Remark \ref{rem:comments}, the inverse calculation is, however, limited by fundamental algebraic obstacles.

This motivates the search for a different route to a neat characterisation of the sets $\cS(N,d)$ for arbitrary parameters $d$ and $N$. We start from the explicit solutions that we have for $d=3$; see Figure \ref{fig:plot}. This plot looks very similar to the quantile function of a normal distribution. In fact, if we interpret the set $\mathcal{S}(N,3)$ as a point sample in $[0,3]$, some numerical exploration leads us to conjecture Theorem \ref{normalconverge} which we prove in the following.

\begin{proof}[Proof of Theorem \ref{normalconverge}]
Let $\bX=(X_1,\ldots,X_d)$ be a uniform random point in $[0,1]^d$ and let $P$ be its projection onto the diagonal, spanned by the unit vector $v=(1/\sqrt{d})\mathbf{1}$, where $\mathbf{1}=(1,\ldots,1)^\top$, i.e.
	\[
	P=(\bX\cdot v) v=\left(\frac1d \sum_{i=1}^d X_i\right) \mathbf{1}. 
	\]
Let's consider the length $\|P\|=\sqrt d\cdot \frac1d \sum_{i=1}^d X_i$. As $\|P\|$ has support on $[0,\sqrt d]$ growing with $\sqrt d$ we re-scale and consider 
	\[
	R=\frac1{\sqrt d}\|P\|=\frac1d \sum_{i=1}^d X_i. 
	\]
The value $R$ is required in order to allow the implementation of the central limit theorem later, however note that we are actually interested in the value
	\[
	R'=\sqrt{d}\|P\|.
	\]
The interpretation of the cumulative distribution function $F_R$ of $R$ is as follows: 
	\[
	F_R(t)=\mathbb{P}(R\le t)=\mathbb{P}(\|P\|\le \sqrt d t)=\mathbb{P}(R' \leq dt)=\mathbb{P}(\bX\in H_{dt}^-)=V^-_d (dt), 
	\]
	where $0 \leq t \leq 1$, and $H_r^-=\{\bx\in \R^d: x_1 + \cdots + x_d \leq r \}$ is the negative half space at position $r$ with 
	$V^-_d : [0,d] \rightarrow [0,1]$ denoting the volume of $H_r^- \cap [0,1]^d$. In other words, 
	\begin{equation}
		V^-_d (r)=F_R\Big(\frac{r}{d}\Big)
		\label{eq1a}
	\end{equation}
	
	The value $R$ is the average of the variables $X_1,\ldots,X_d$ which are i.i.d. uniform in $[0,1]$ with mean $\mu=\E X_1=1/2$  and variance $\sigma^2=\Var(X_1)=1/12$. The central limit theorem states that the normalized variables 
	\[
	Z_d= \sqrt{d} \frac{R-\mu}{\sigma}=2\sqrt{3d} \Big(R-\frac12\Big)
	\]
	converge in distribution to a standard normal as $d\to \infty$. One way to express this convergence is in terms of cumulative distribution functions (CDFs). If $\Phi$ is the CDF of a standard normal, we have 
	\begin{equation}
		F_{Z_d}(t)\to \Phi(t),
		\label{eq2a}
	\end{equation}
	as $d\to \infty$, for all $t\in \R$ (i.e. $F_{Z_d}$ converges to $\Phi$ pointwise). Combining \eqref{eq1a} and \eqref{eq2a} we obtain 
	\begin{align*}
		V^-_d (r) =F_R\Big(\frac{r}{d}\Big)=F_{Z_d}
		\Bigg(2\sqrt{3 d} \bigg(\frac{r}{d}-\frac12\bigg) \Bigg)
		\rightarrow	
		\Phi\Bigg({2\sqrt 3d}\bigg(\frac{r}{d}-\frac{1}{2}\bigg)\Bigg)
	\end{align*}
	pointwise for all $r>0$ as $d \rightarrow \infty$.
\end{proof}

As outlined in the introduction, we can now derive our probabilistic characterisation via the symmetry of the quantile function.
The rate of the convergence described by the Central Limit Theorem is customarily measured via the Berry-Esseen Theorem \cite{berry}; see Appendix \ref{app1}

\subsection{Numerical experiments} As a first numerical experiment, we investigate the convergence in Theorem \ref{normalconverge}. For $d=3, 5 \hspace{1mm} \textnormal{and} \hspace{1mm} 10$ and $N=10000$, we numerically calculate parameter values $r_i$ $(1 \leq i \leq 9999)$ from Theorem \ref{algebraicsol} and subsequently use these values to calculate probabilities from the cumulative distribution function in \eqref{normaldistribution}. Comparing these probability values at $r_i$ with the desired cumulative volume of intersection, i.e. $i/10000$ for $1 \leq i \leq 10000$, the error is plotted in Figure \ref{fig:convplot} for increasing dimension. As expected, for increasing dimension the error becomes on average smaller, showing the desired convergence. Note that the values we obtain in our experiments are much smaller than what is guaranteed by the general bound in \eqref{rateOfConv}.

\begin{figure}[h!]
	\includegraphics[width=7cm]{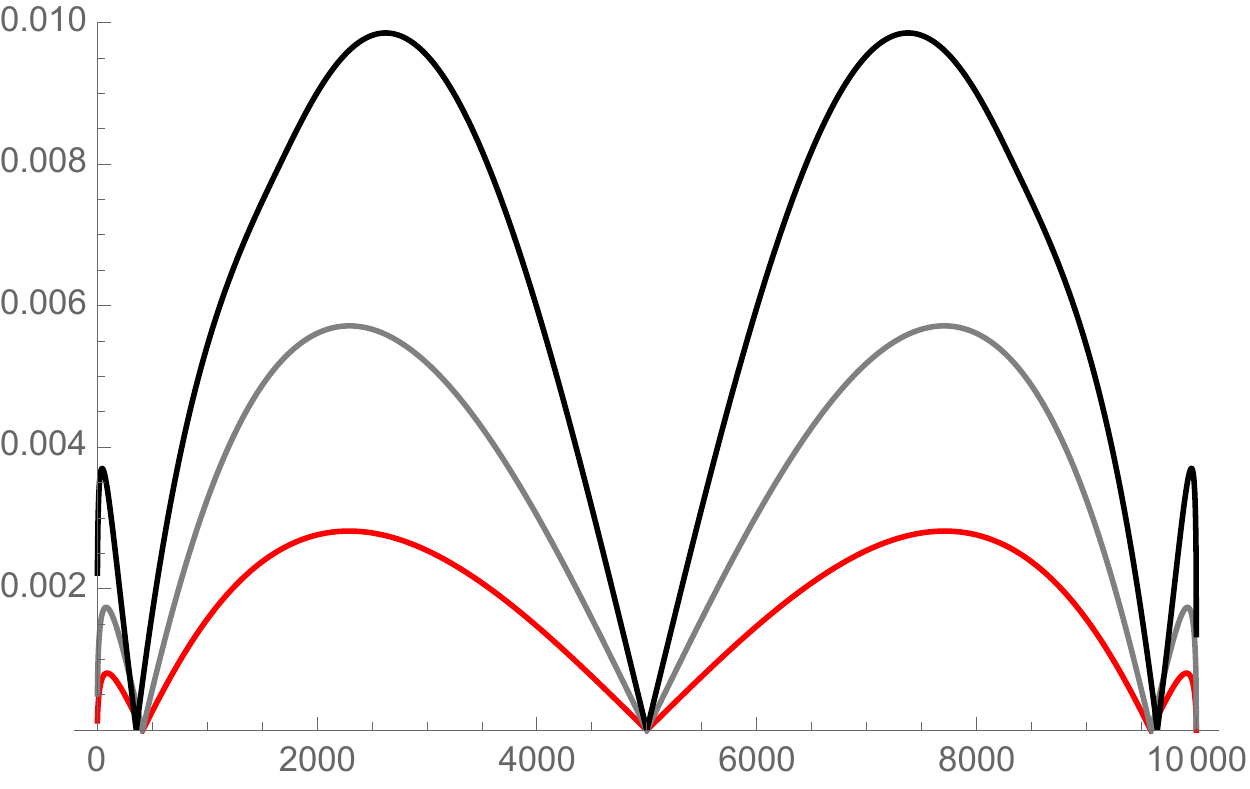}
	\caption{Illustration of convergence to the cumulative distribution function as proved in Theorem \ref{normalconverge}. The $x$ axis represents the index, $i$, of the set in the partition with $1\leq i \leq 10000$, while the $y$-axis contains the absolute difference $\left|V_{d}^-(r_i) - \Phi \left( 2 \sqrt{3d} \left( \frac{r_i}{d} - \frac{1}{2} \right) \right)\right |$. Black: $d=3$, Gray: $d=5$, Red: $d=10$.} \label{fig:convplot}
\end{figure}

Next, we would like to know how close the partitions generated by the probabilistic characterisation of $\cS(N, d)$ in \eqref{prob} are to being equivolume. 
We fix $d=5$ and set $N=100, 1000$ and $10000$. Using two consecutive parameter values from the normal approximation \eqref{prob}, the volume of a single partitioning set, generated by the probabilistic values for the parameter $r$, can be calculated using \eqref{polyeqn}. The resulting approximate volume for each partitioning set is then compared with the desired volume of $1/N$; see Figure \ref{fig:volplot}. We observe that the error is bounded inside the main body of the cube but much larger in the extreme ends which indicates that our deterministic distribution is not well approximated by the tails of the normal distribution. However, the number of sets in the partition that are affected from this large deviation is relatively small.
Our numerical results indicate that the relative error is at most about $7\%$ for \emph{most} of the sets; i.e. if a set is expected to have volume $10^{-k}$, the volume of the approximated set is in the interval $10^{-k}\pm 7 \cdot 10^{k-2}$.
To be more precise, note for example that in the case $d=5$ and $N=10000$ we leave the initial segment of the cube, i.e. for parameter values $-1 \leq r \leq 0$, at partition set number $84 = \big\lceil \frac{10000}{5!} \big\rceil$ and symmetrically return to the final segment at set number $10000-84 = 9916$ for parameter ranging $-d \leq r \leq -(d-1)$. 
These values are denoted in Figure \ref{fig:volplot} by two red dashes. Hence, $9832$ of all $10000$ sets, or about $98\%$ of the sets, are well approximated.

\begin{figure}[h!]
	\includegraphics[width=5cm]{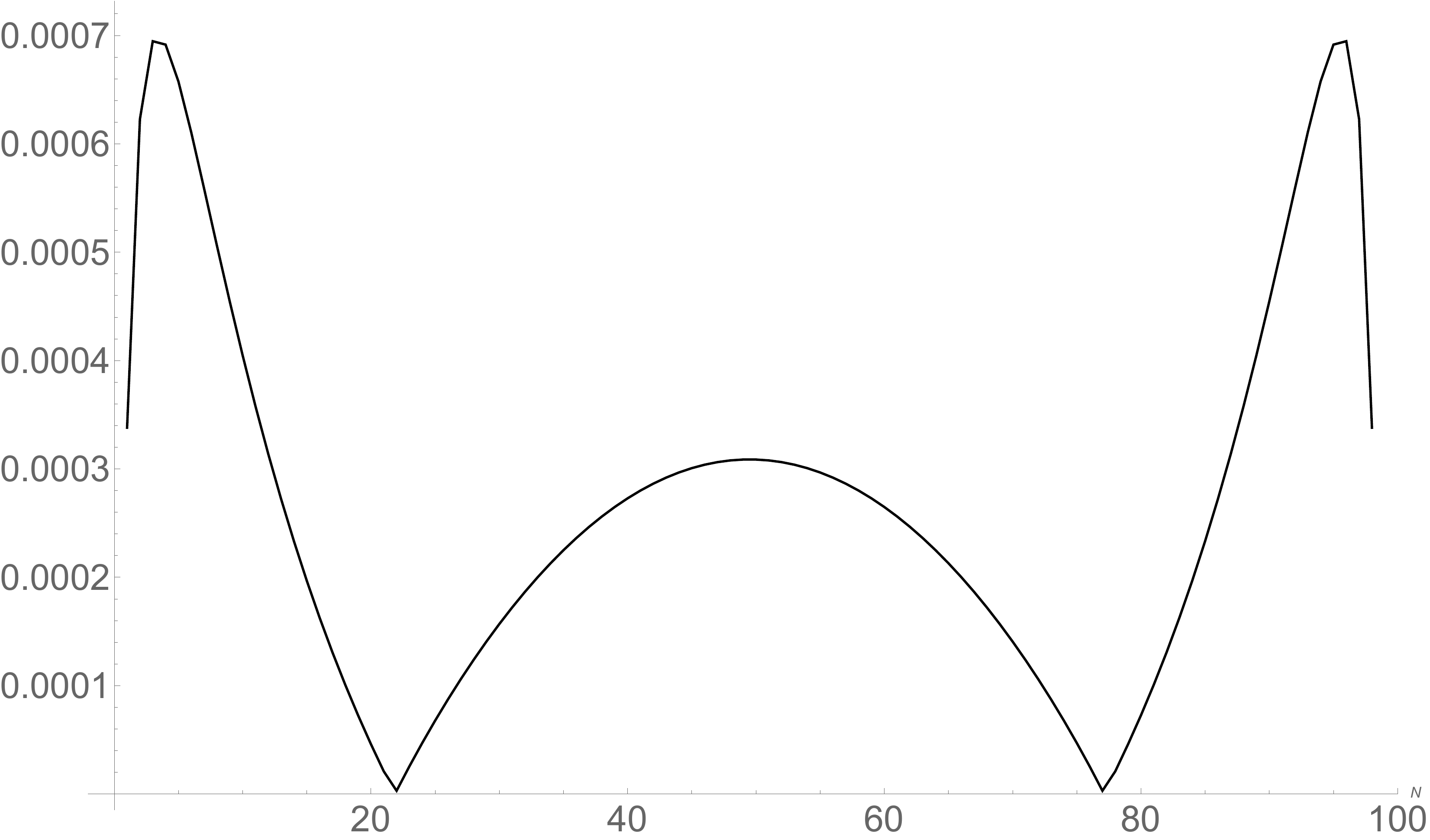} \quad
	\includegraphics[width=5cm]{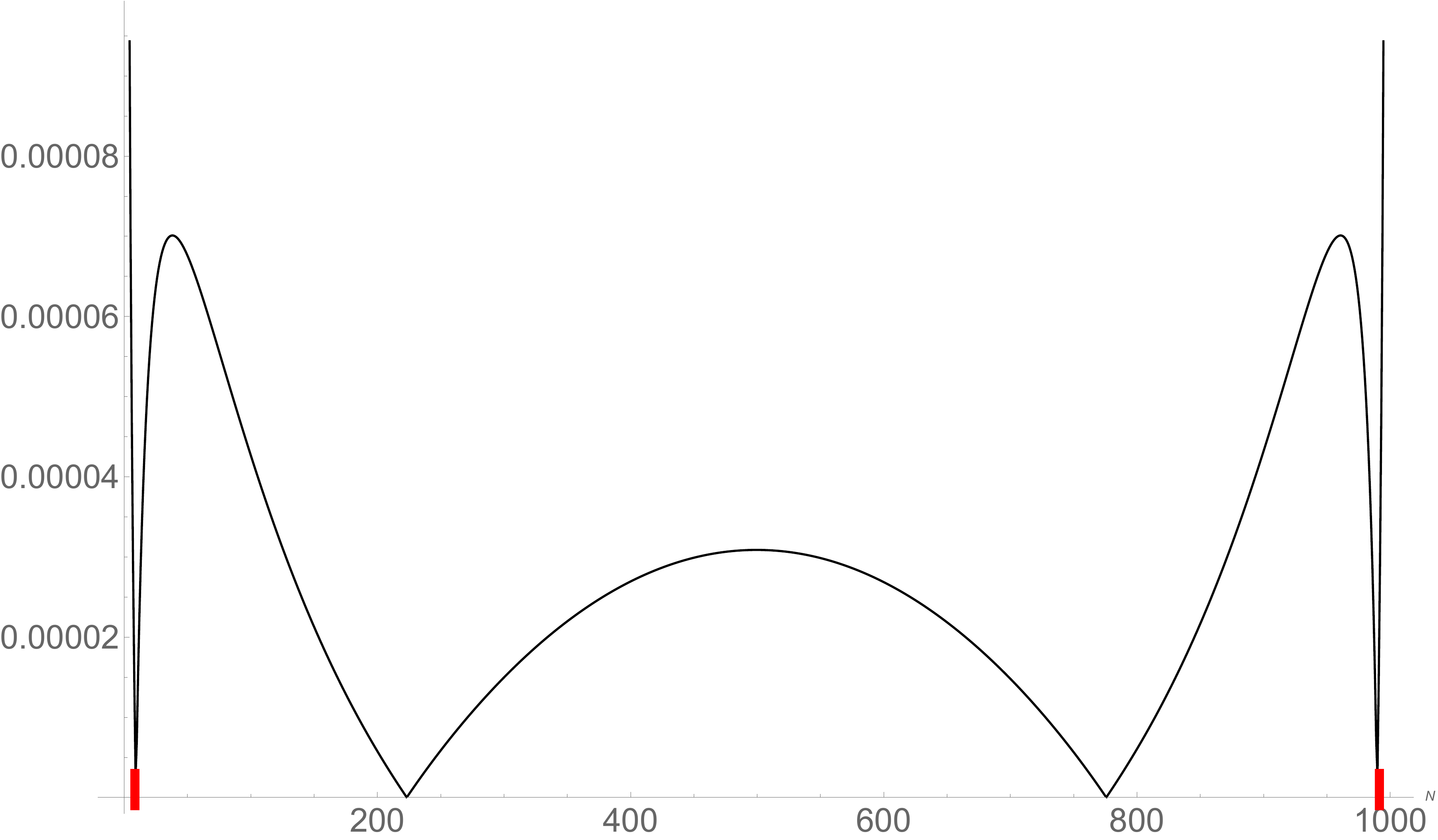} \quad
	\includegraphics[width=5cm]{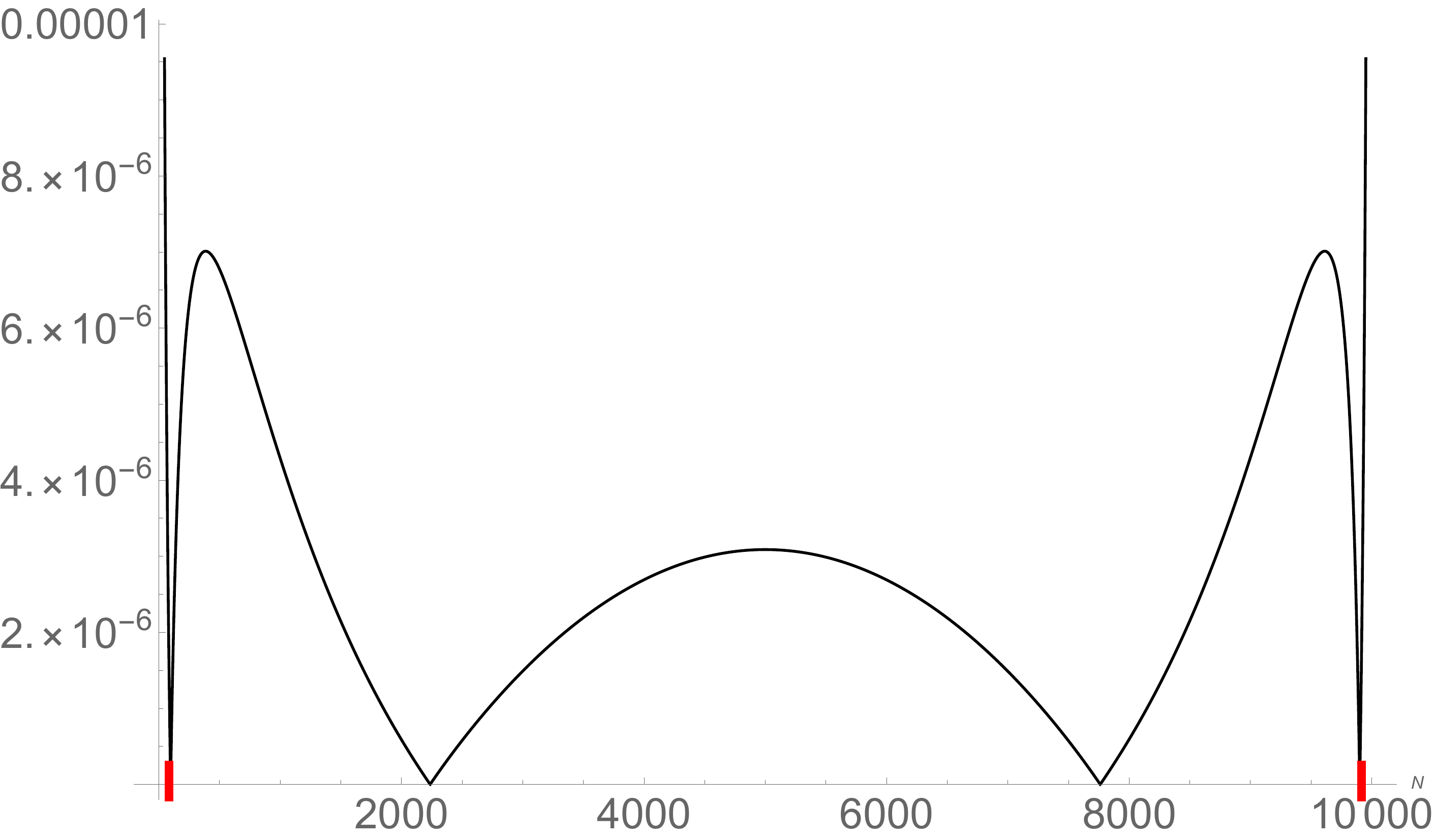}
	\caption{Deviation from $1/N$ of the volume of each partition set generated by the probabilistic values for $r$ in dimension 5. From left to right, $N=100, 1000, 10000$. The $x$-axis represents the index, $i$, of the set in the partition, while the $y$-axis shows the deviation of the volume of the $i$-th set in the approximate partition from the desired volume of $1/N$. \label{fig:volplot}}
\end{figure}

\subsection{Conclusions}
For a given $N$, if one is required to partition the $d-$dimensional unit cube into $N$ equivolume slices with partitioning lines orthogonal to the main diagonal, we suggest to proceed as follows:

\begin{itemize}
	\item If $N \leq d!$: In this case, all partitioning planes $H_r$ belong in the range $1 \leq r \leq d-1$. As our numerical experiments above confirm, in the main body of the hypercube the normal approximation \eqref{prob} generates a quasi-equivolume partition. Therefore the recommendation would be to generate the quasi-equivolume partition using Theorem \ref{normalconverge}. We note that the small errors we make in this region should play a limited role in the overall picture, particularly with respect to applications in uniform distribution theory.
	
	\item If $N > d!$: The partitioning hyperplanes now sit in every segment of the cube and we are required to consider the full range of the parameter $0 \leq r \leq d$. As in the first case, the normal approximation yields an accurate equivolume partition in the main body of the cube.
	Our numerical experiments above inform that in the extreme segments, i.e. when $0 \leq r \leq 1$ and $d-1 \leq r \leq d$ the normal approximation deviates significantly from the desired volumes of the sets. To deal with this inaccuracy, we recall the algebraic formula in Theorem \ref{algebraicsol}. In the case $k=d$ (i.e. in the initial segment $0 \leq r \leq 1$), the formula reduces to $$r^d = d! \left( 1- V^+_d(r) \right).$$ We suggest to solve this simplified equation for $r$ to generate the hyperplanes contained in the initial segment. By symmetry, we can also easily obtain the values for $r$ in the upper most segment.
\end{itemize}


\section{Looking for minimal sets}
\label{sec:num}

In this section, we use several different black-box optimisers to look for approximations of the minimal sets $\cS^{*}(N,2)$ and $\cS^{*}(N,3)$, which we then compare to our equivolume stratification. Rather than working with the hyperplane parameters $r_i \in [0,d]$ for $i = 1, \ldots, N-1$ as in the previous sections, we will now consider values $0 \leq p_1 \leq \cdots \leq p_{N-1} \leq \sqrt{d}$. These values correspond to the Euclidean distance between the origin and the intersection points of the chosen hyperplanes with the diagonal and are more convenient to use with the optimisers. Black-box optimisers generate a number of samples, evaluate the target function for these samples (in this case, the expected $\mathcal{L}_2$-discrepancy) and then update some information on the problem. This depends heavily on the type of optimiser, it can be only the best solution found so far or an underlying distribution which can be used to find the best solution. 

Black-box optimisers present two advantages. The first is that it is a very difficult task to directly compute the expected $\mathcal{L}_2-$discrepancy of a general stratification. Recent approaches have always studied square boxes aligned with the axes \cite{mf22} or used numerical experiments \cite{mf21}; note that finding the best positions for 3 points was already an involved process, see \cite{mf21}. Black-box optimisers, in contrast, do not require us to obtain new results on the \emph{structure} of the function but directly try to find good intersection points for the given setting through a systematic trial and error process.

The second advantage is that for each intersection point, we only need to keep a single parameter since we know the hyperplanes are orthogonal to the diagonal, regardless of the dimension $d$. We can model the problem as a $N-1$ dimensional optimisation problem $(p_1,\ldots,p_{N-1})$, where for each element $p_i$ we have the bounds $p_i \in [0,\sqrt{d}]$, as well as the ordering constraints on the different parameters, i.e. $p_1 \leq p_2 \leq \ldots \leq p_{N-1}$.

\begin{table}
    \includegraphics[width=10cm]{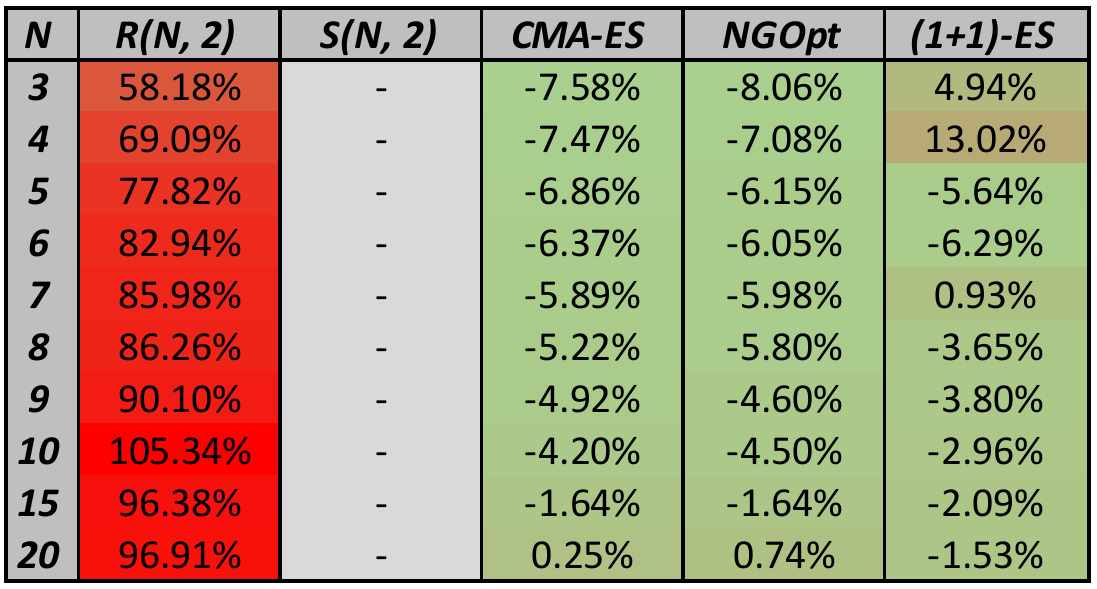}

    \vspace{6mm}
    \caption{A percentage change comparison of the expected $\mathcal{L}_2$-discrepancies of different point sets with respect to the equivolume stratification: uniformly distributed random points $R(N,2)$, the equivolume partition corresponding to $\cS(N,2)$ and the best sets obtained by the three black-box optimisers. Expected $\mathcal{L}_2$-discrepancy values are done with 10000 repetitions to guarantee a better precision. We refer to Table \ref{Compare2} in Appendix \ref{app3} for the exact expected discrepancy values.}
    \label{fig:percentagecomparison}
\end{table}

\subsection{Experiment setup} 
Regardless of the chosen optimiser -- see Appendix \ref{app2} for a description of the different algorithms: CMA-ES, NGOpt and (1+1)-ES -- the general model is identical. In dimension $d$, we have an $N-1$ dimensional problem, where each parameter is in the interval $[ 0,\sqrt{d} ]$. The optimiser is given a budget of 1000 evaluations where each evaluation consists in approximating the expected $\mathcal{L}_2$-discrepancy of the corresponding stratification. To approximate this function, we first sort the different parameters to obtain a valid stratification. We sample randomly a point in each stratum then compute the $\mathcal{L}_2$-discrepancy of the resulting point set. This is repeated 1500 times and averaged to obtain an approximation of the expected $\mathcal{L}_2$-discrepancy.

All experiments were run in dimension 2 and 3 in a low-fidelity setting, with a high-fidelity correction at the end. Indeed, since the optimisers are tracking only the best value found so far, it is possible that the returned point sets have slightly higher discrepancy and the calculation during the optimiser run was one with a high variance. To correct for this phenomenon, we recompute the expected $\mathcal{L}_2$-discrepancies of the corresponding point sets, this time with 10000 repetitions to guarantee a better precision. No initialisation is provided to the optimisers and 10 runs are made for each instance.

Note that we could extend our experiments beyond dimension 3. The main difficulty is sampling uniformly from given hypercube slices. In our experiments, we define a slice based on the distance between two successive points $p_i$ and $p_{i+1}$, rotate and translate this slice into the right position and then sample a point in it. Note that re-sampling may be necessary as usually not all of the slice is indeed contained in the unit cube. However, this is in general not a problem. For $d=3$ we use quaternions to define the rotations. In higher dimensions this gets more involved.

Both the {\tt ioh} \cite{IOHexperimenter} and {\tt Nevergrad} \cite{Nevergrad} Python packages were used for the optimisers, the implementations are taken from the { \tt Nevergrad} package. Experiments were run in Python, using the {\tt random} module to generate the points. The kernel density estimations were obtained using the {\tt sklearn} package and in particular {\tt Kernel Density}.

\subsection{Experimental results}

Table \ref{fig:percentagecomparison} gives a percentage comparison of the expected $\mathcal{L}_2$-discrepancy of the equivolume partition $\cS(N,2)$, of uniformly distributed random points $R(N,2)$, and of the best distributions found by the different optimisers, using $\cS(N,2)$ as a baseline. The discrepancy estimation is obtained with 10000 repetitions.

As already empirically observed in \cite{mf21}, the expected $\mathcal{L}_2$-discrepancy of the equivolume partition is approximately smaller by a factor of 2 than the discrepancy of uniformly distributed random points for all different set sizes tested. All three black-box optimisers return very similar values, which are about 7\% better for the lowest values of $N$ after re-evaluation of the expected $\mathcal{L}_2$-discrepancy. Since all three optimisers return similar values, this suggests that the point ordering in the problem structure was not an issue, at least for these low values of $N$. The only outliers are for very low values of $N$, where the (1+1)-ES is struggling: this seems to be largely due to the low-fidelity optimisation, the best discrepancy values found were around 0.025 for $N=3$ before correction. We also note that for $N=4$, both the discrepancy values and the point sets returned are quite similar to the improved construction given in \cite{mf21} (see Table \ref{points2d} for the point sets). The equivolume partition performs better than the optimisers for $N=20$ and its relative performance improves when $N$ increases in general.

As mentioned previously, discrepancy values for the point sets returned by the optimisers were recomputed with a higher precision. This implies that possibly better point sets were \emph{overseen} during the optimisation because of the imprecision in the expected $\mathcal{L}_2$-discrepancy calculations. The gradually decreasing gap, both for the returned discrepancy value and the corrected one, also suggests that our optimisers may be struggling with the higher dimensional problem.

Table \ref{points2d} gives examples of the obtained point sets (see also Appendix \ref{app3}).
\begin{table}[]
    \centering
    \begin{tabular}{|c|c|}
    \hline
        N & Point set obtained by CMA-ES  \\
        \hline
        3 & [0.525326, 1.094506] \\
        \hline
        4 & [0.416424, 0.880939, 0.995149]\\
        \hline
        5 & [0.394111, 0.617818, 0.967749, 1.021636]\\
        \hline
        6 & [0.346292, 0.547769, 0.833453, 0.916769, 1.066748]\\
        \hline
        7 & [0.325707, 0.506785, 0.6853, 0.876577, 0.97997, 1.087066]\\
        \hline
        8 & [0.322496, 0.482121, 0.583169, 0.863154, 0.874371, 0.981856, 1.066117]\\
        \hline
        9 & [0.293892, 0.448608, 0.525631, 0.728415, 0.864673, 0.902607, 0.992526, 1.097269]\\
        \hline
        10 & [0.295754, 0.447257, 0.49142, 0.603255, 0.805188, 0.88467, 0.923663, 1.005221, 1.120071] \\
         \hline
        15 & [0.279248, 0.27951, 0.434256, 0.541966, 0.560045, 0.623614, 0.7019, \\ 
        & 0.79564, 0.798366,  0.814506, 0.931907, 0.958269, 1.010601, 1.333425] \\
        \hline
 		20 & [0.14877, 0.239469, 0.432346, 0.471298, 0.487274, 0.50509, 0.545568, 0.58021, 0.612064, \\
		& 0.737669, 0.802694, 0.856641, 0.889084, 0.921972, 0.946705, 0.977818, 1.009777, 1.022098, 1.259538] \\
         \hline
    \end{tabular}
	\vspace{5mm}
    \caption{Optimal partitioning points returned by CMA-ES. Note that we need $N-1$ hyperplanes for a set of $N$ points.}
    \label{points2d}
\end{table}

Interestingly, all the sets returned by the three optimisers have a similar structure once $N$ is big enough ($N\geq 5$). 
This structure can be visualised using a \emph{kernel density estimate} with a Gaussian kernel as illustrated in Figure \ref{kernels2}.
\begin{figure}[h!]

  \includegraphics[width=0.32\textwidth]{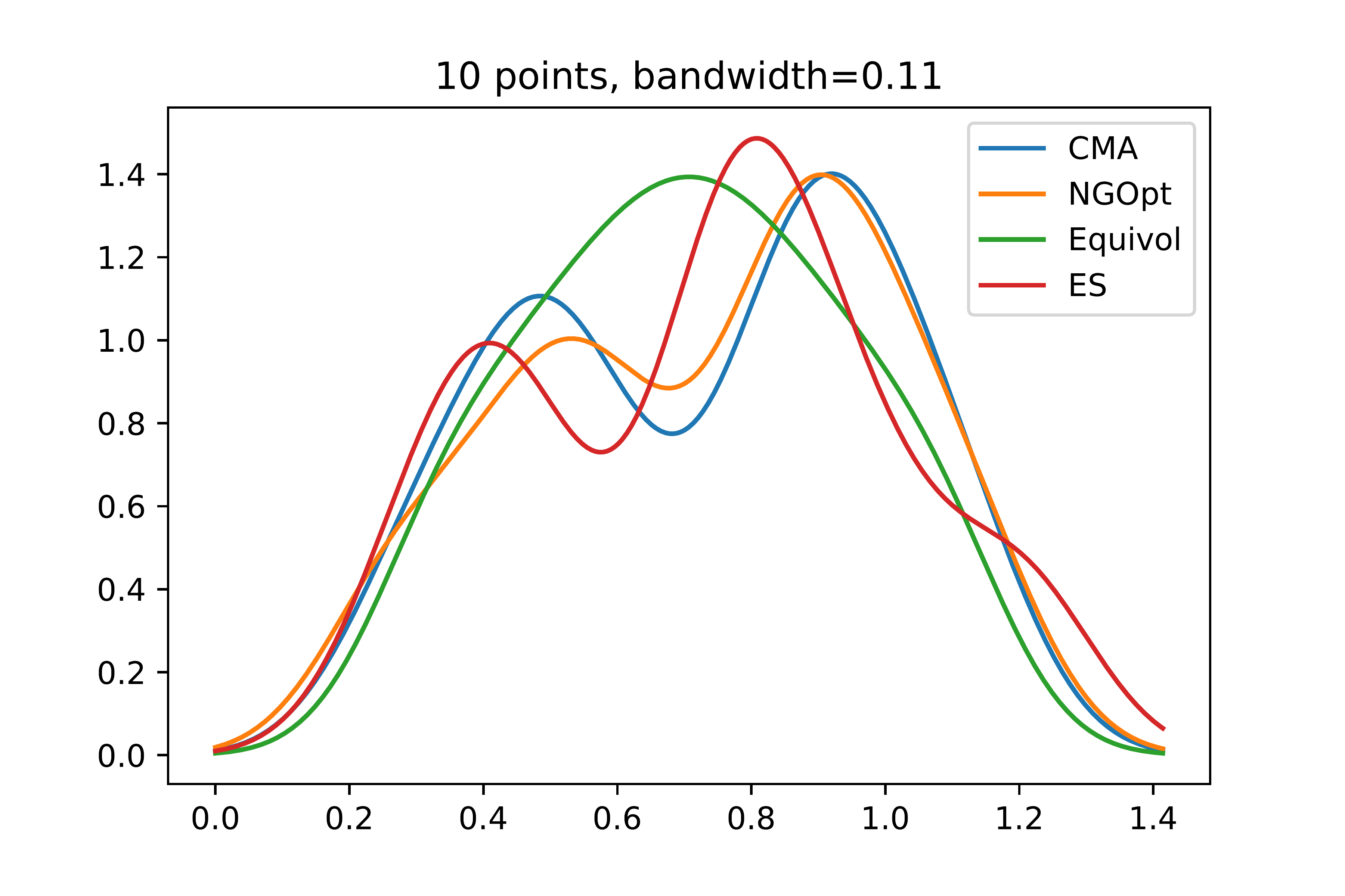} \hfill
  \includegraphics[width=0.32\textwidth]{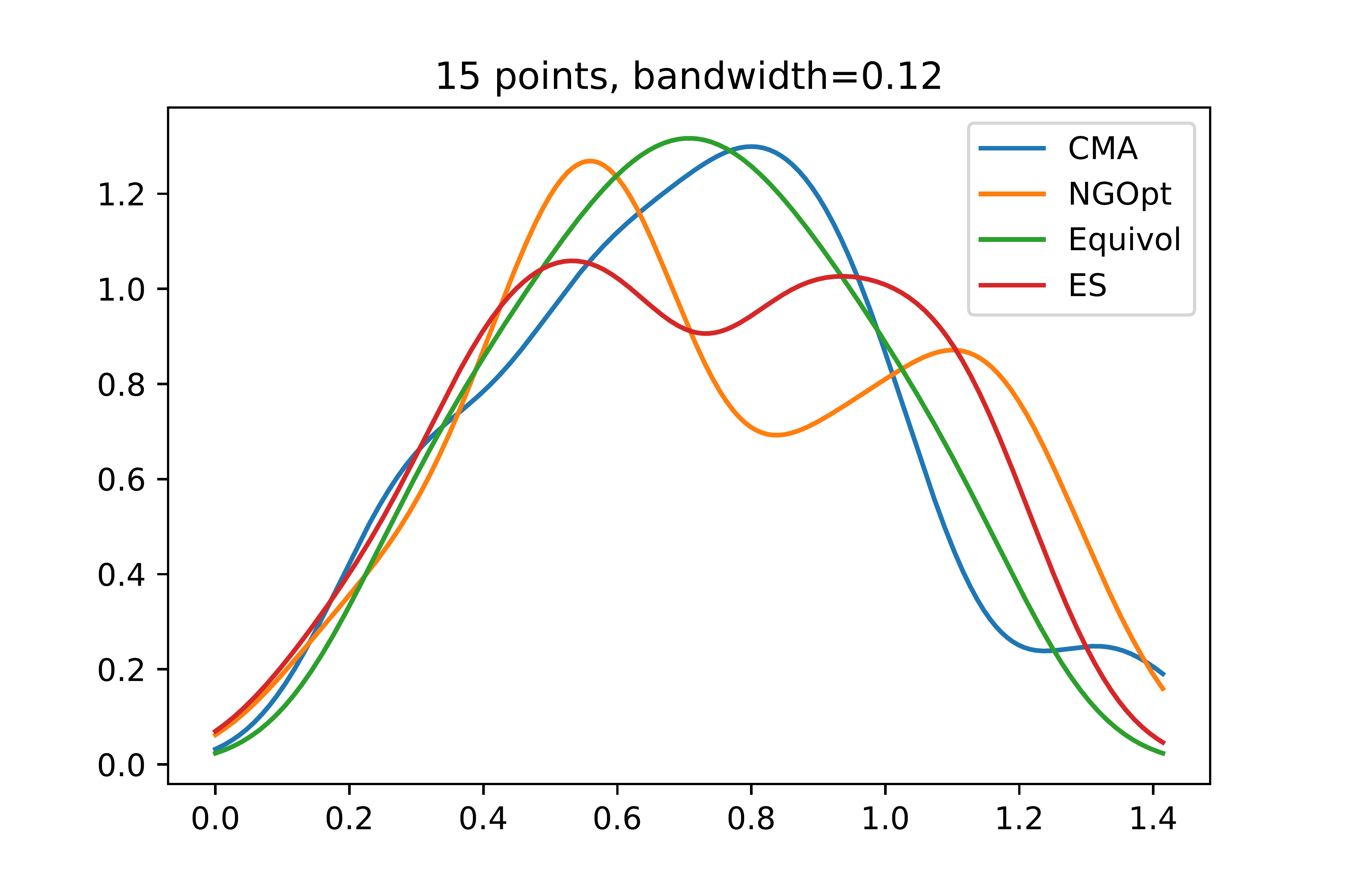} \hfill
  \includegraphics[width=0.32\textwidth]{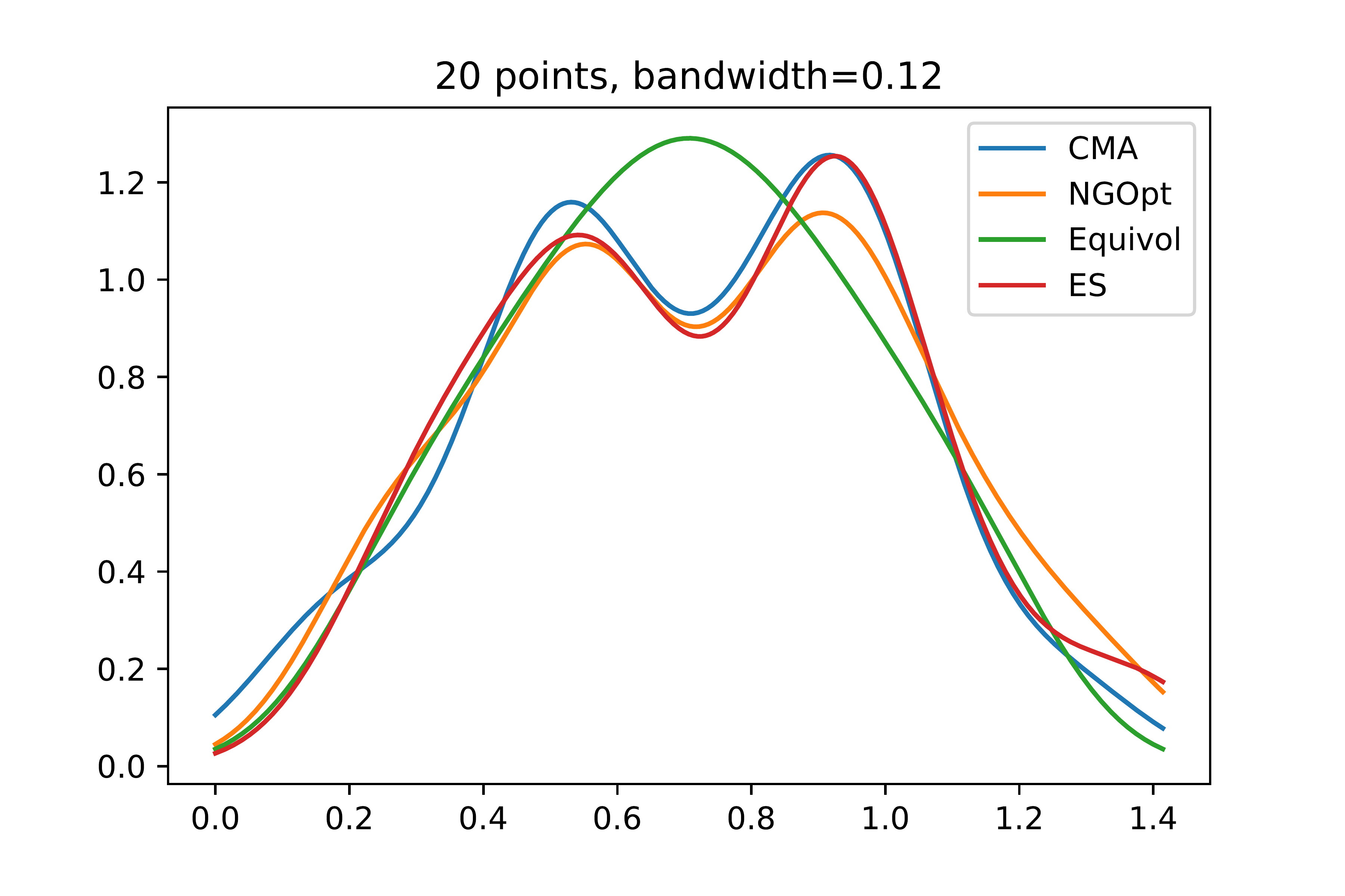} \hfill
  \caption{Kernel density estimate plots with Gaussian kernels for the best partitioning points obtained by the optimisers in dimension 2 as well as the equivolume sets for $N=10,15,20$.}
  \label{kernels2}
\end{figure}

The point sets seem to have two clusters of points on the diagonal. 
The spread of the points seems bigger meaning that the first point has a smaller value as the first of the equivolume set, while the last point has a larger value as the corresponding point of the equivolume set. 

Turning to dimension 3, Table \ref{comparison3d} gives the percentage comparison of the expected $\mathcal{L}_2$-discrepancy of the different methods with $\cS(N,3)$ as the baseline. The results are quite similar to the $d=2$ case, where all three black-box optimisers returning similar values, all outperforming the equivolume stratification and with the (1+1)-ES giving the worst results. Once again, we notice that the relative performance of the equivolume strategy improves as $N$ increases. The plots in Figure \ref{Plots3d} are similar to corresponding plots in dimension 2: there are clusters of strata and the spread of the points seems larger. However, the last diagonal point $p_{N-1}$ in the optimisers' sets is not always larger than the last diagonal point for $\cS^*(N,3)$. We visualise the point sets again with a kernel density estimate in Figure \ref{kernels3}.

\begin{table}[h!]

\includegraphics[width=10cm]{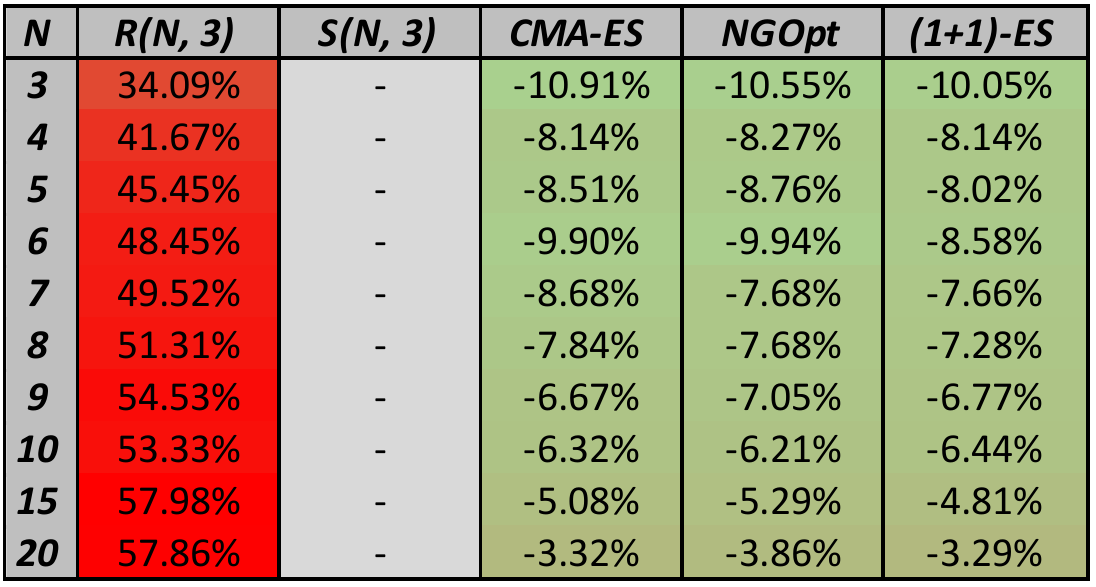}

\vspace{6mm}
\caption{ A percentage change comparison of the expected $\mathcal{L}_2-$discrepancy values of different point sets using $\cS(N,3)$ as a baseline. Discrepancy values are done with 10000 repetitions; we refer to Table \ref{Compare3} in Appendix \ref{app3} for the exact expected discrepancy values. \label{comparison3d}}
\end{table}

\begin{figure}[h!]

  \includegraphics[width=0.3\textwidth]{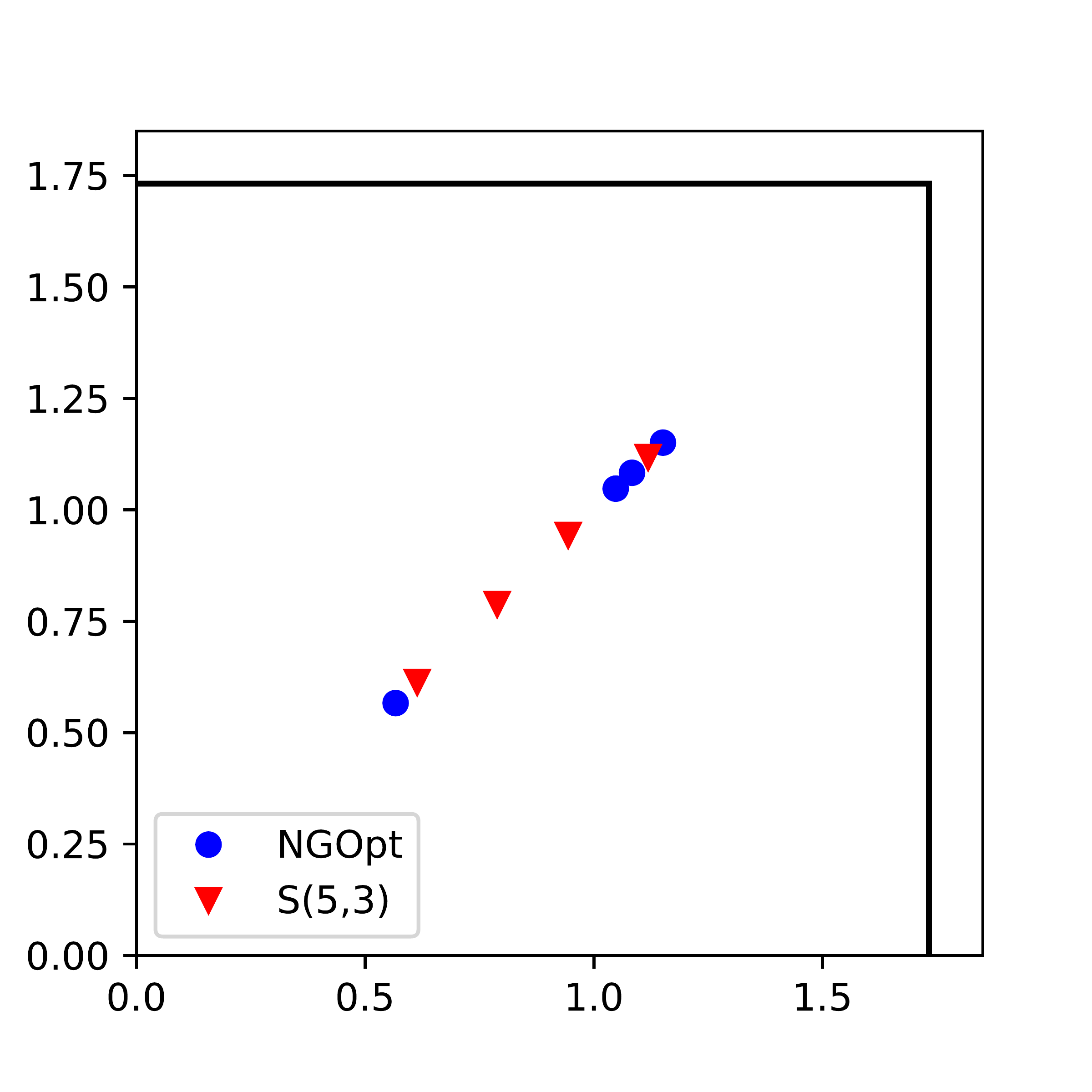} \hfill
  \includegraphics[width=0.3\textwidth]{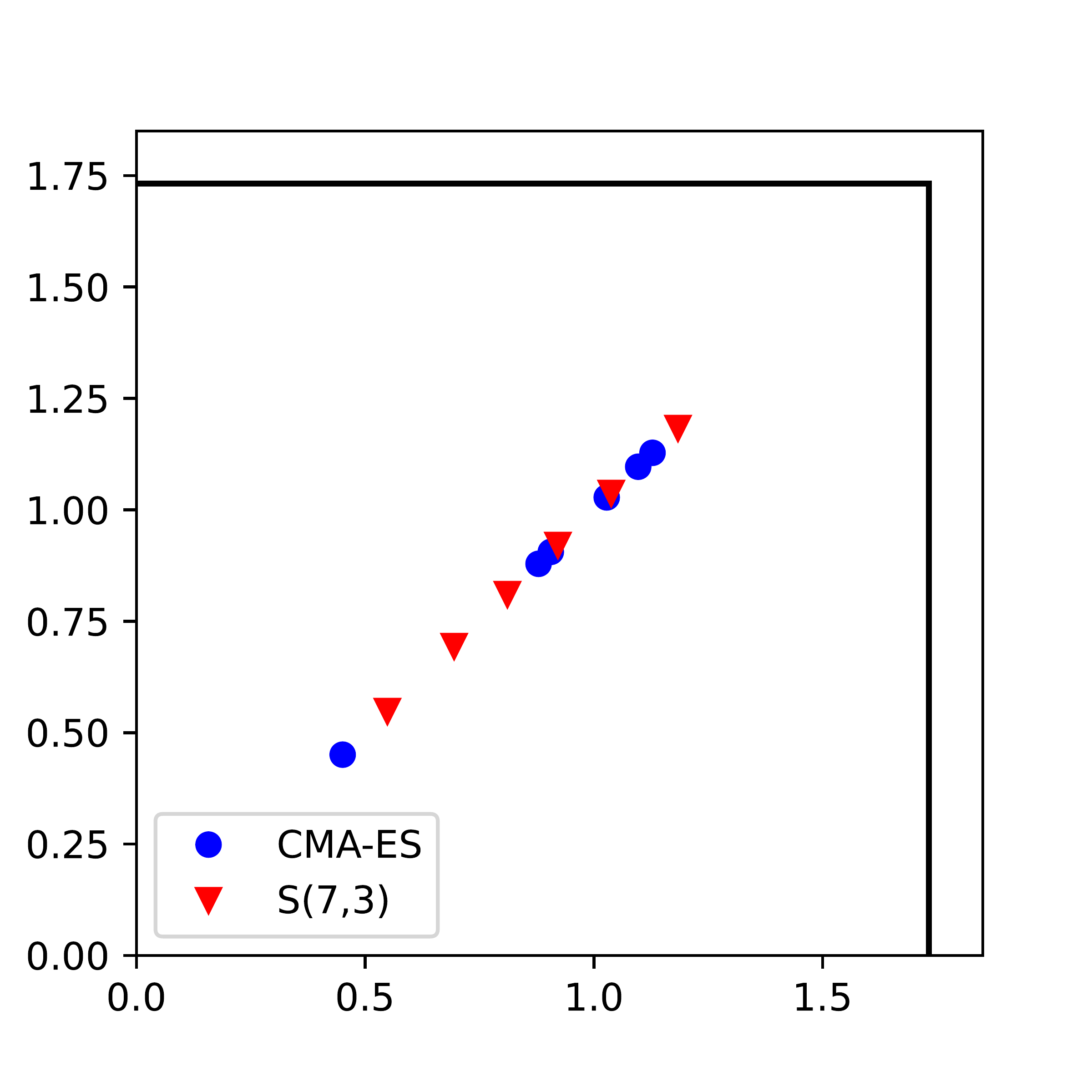} \hfill
  \includegraphics[width=0.3\textwidth]{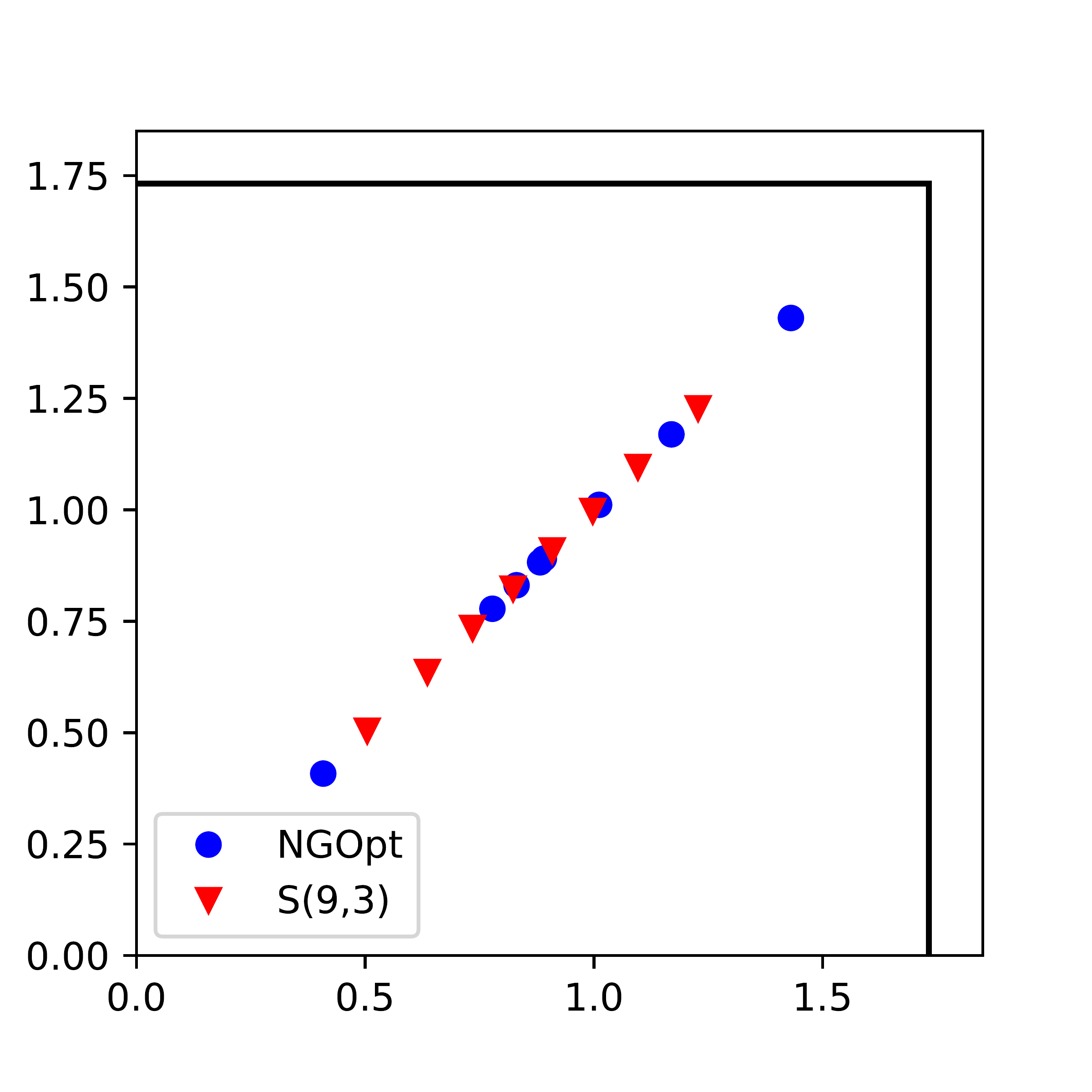} \hfill
  \medskip
  \includegraphics[width=0.3\textwidth]{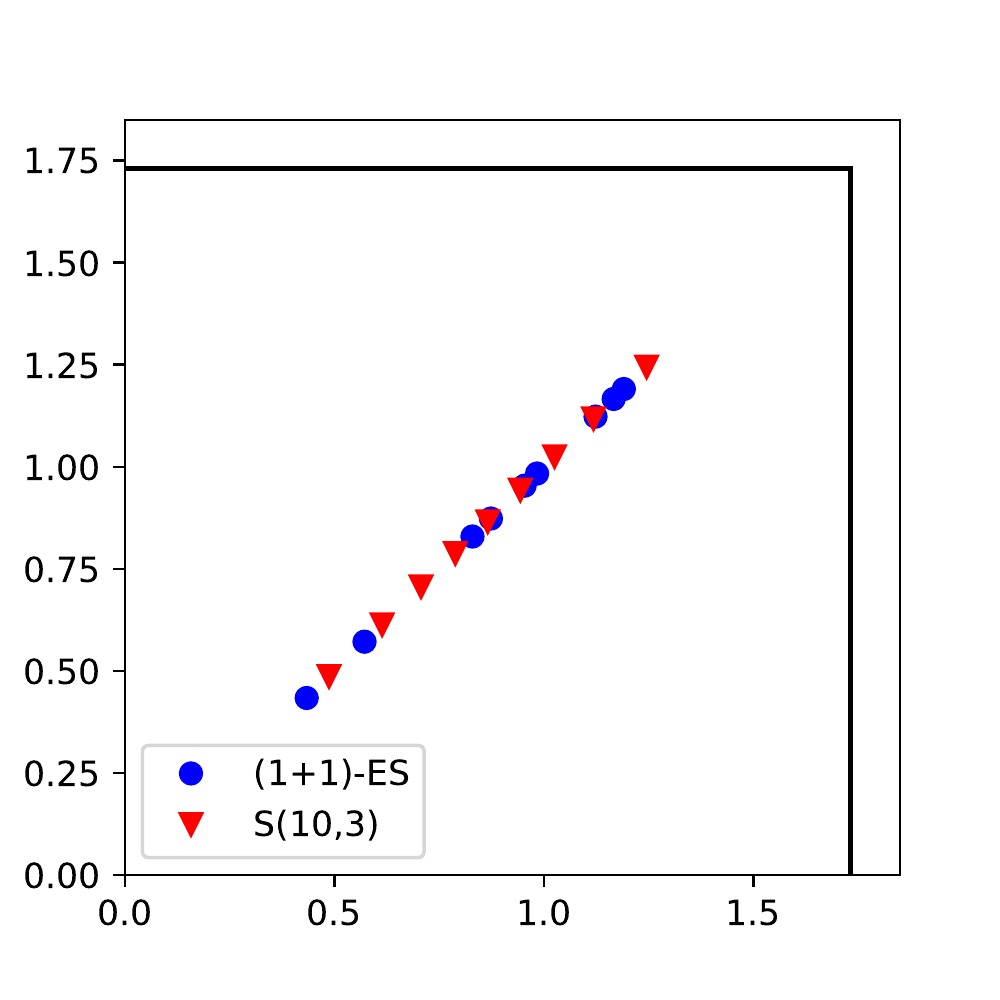}\hfill
  \includegraphics[width=0.3\textwidth]{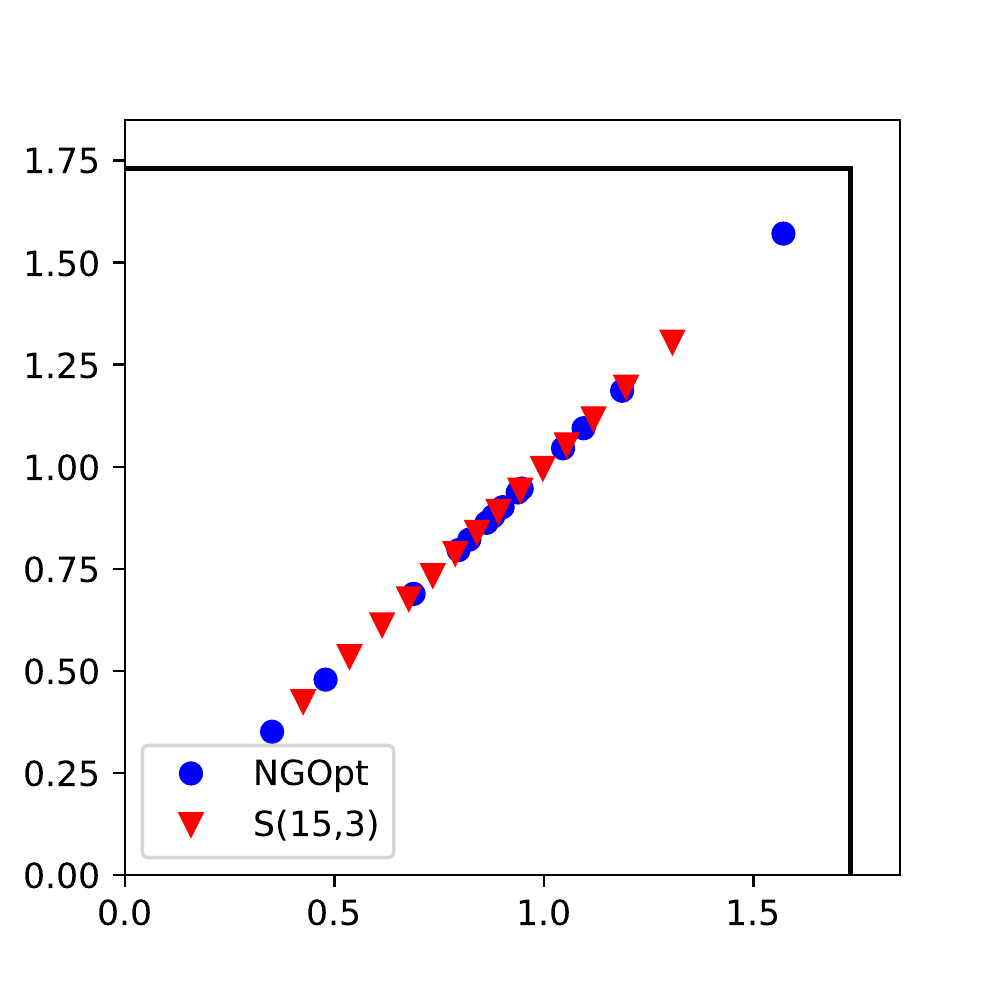} \hfill
  \includegraphics[width=0.3\textwidth]{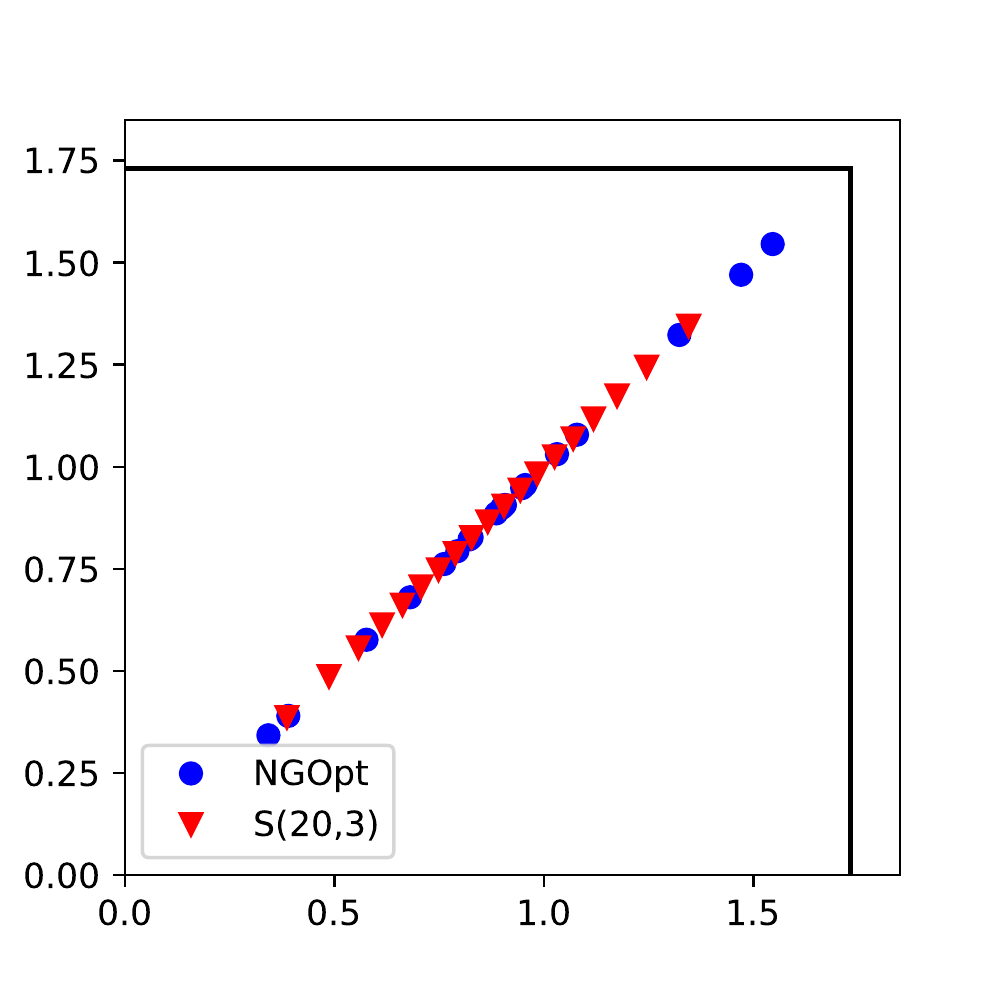} \hfill
  \caption{Best partitions generating stratified point sets obtained by the optimisers in dimension 3 compared to the equivolume partition for $N=5,7,9,10,15,20$.}
  \label{Plots3d}
\end{figure}

\begin{figure}[h!]

  \includegraphics[width=0.32\textwidth]{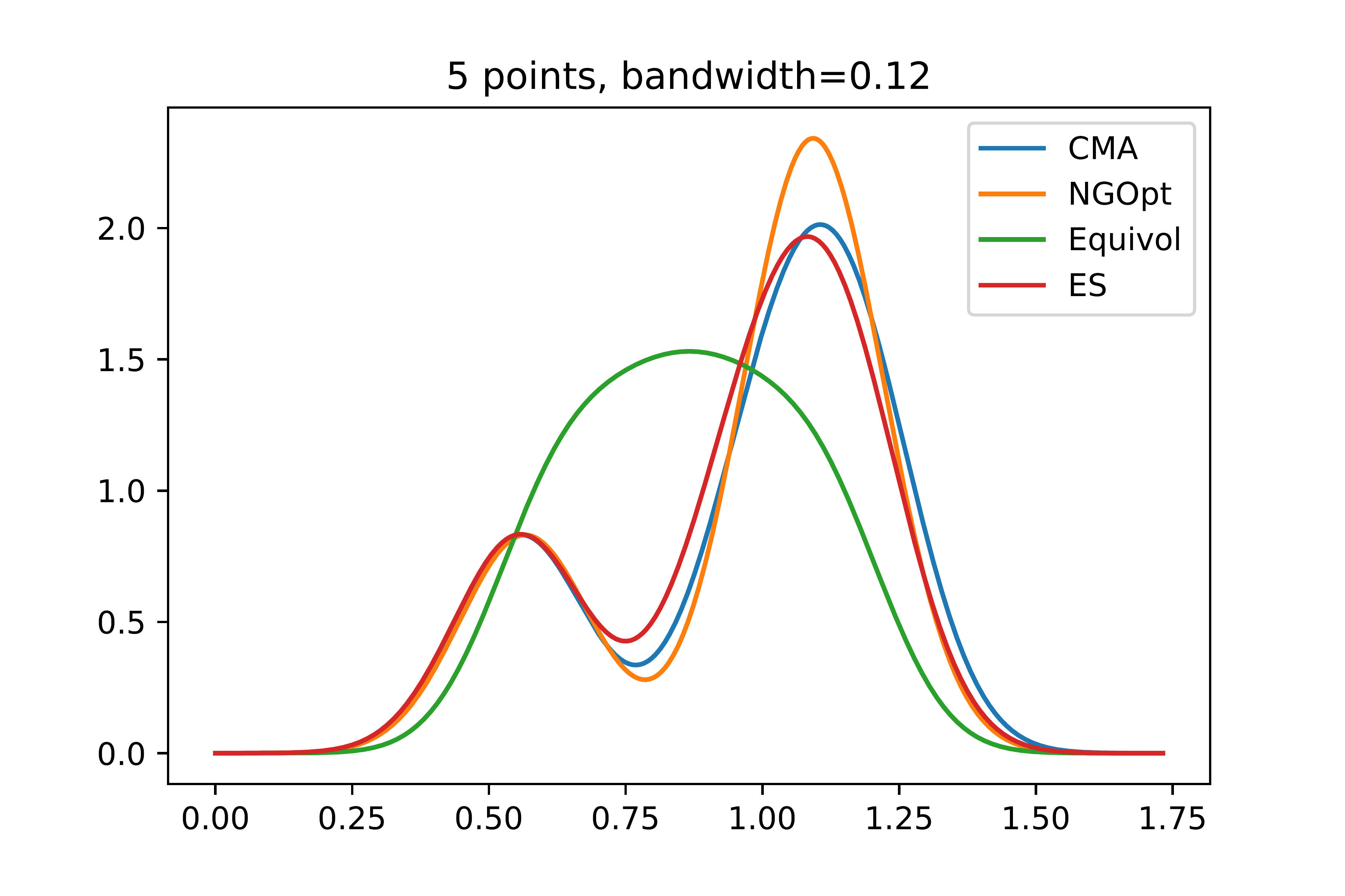} \hfill
  \includegraphics[width=0.32\textwidth]{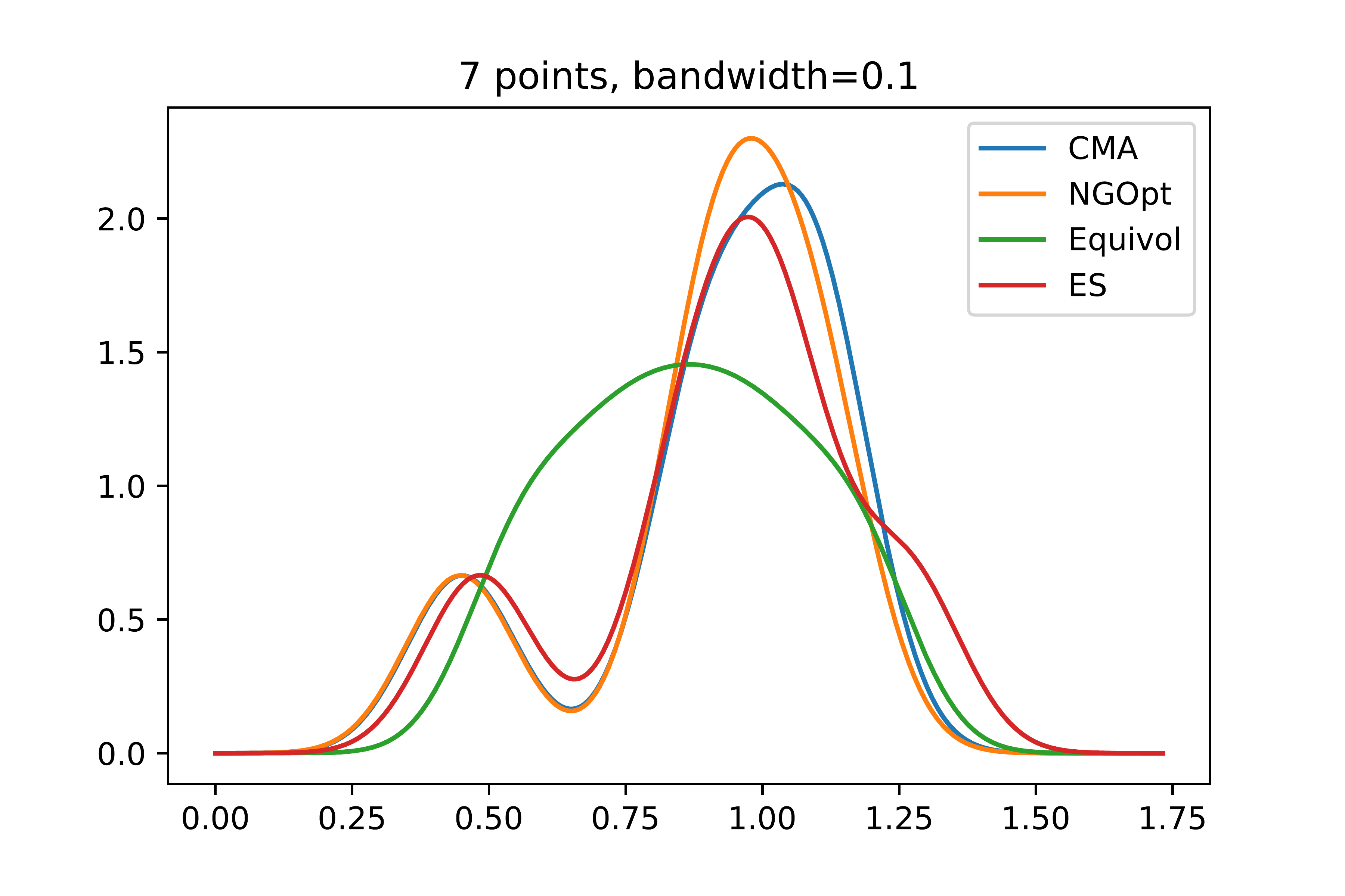} \hfill
  \includegraphics[width=0.32\textwidth]{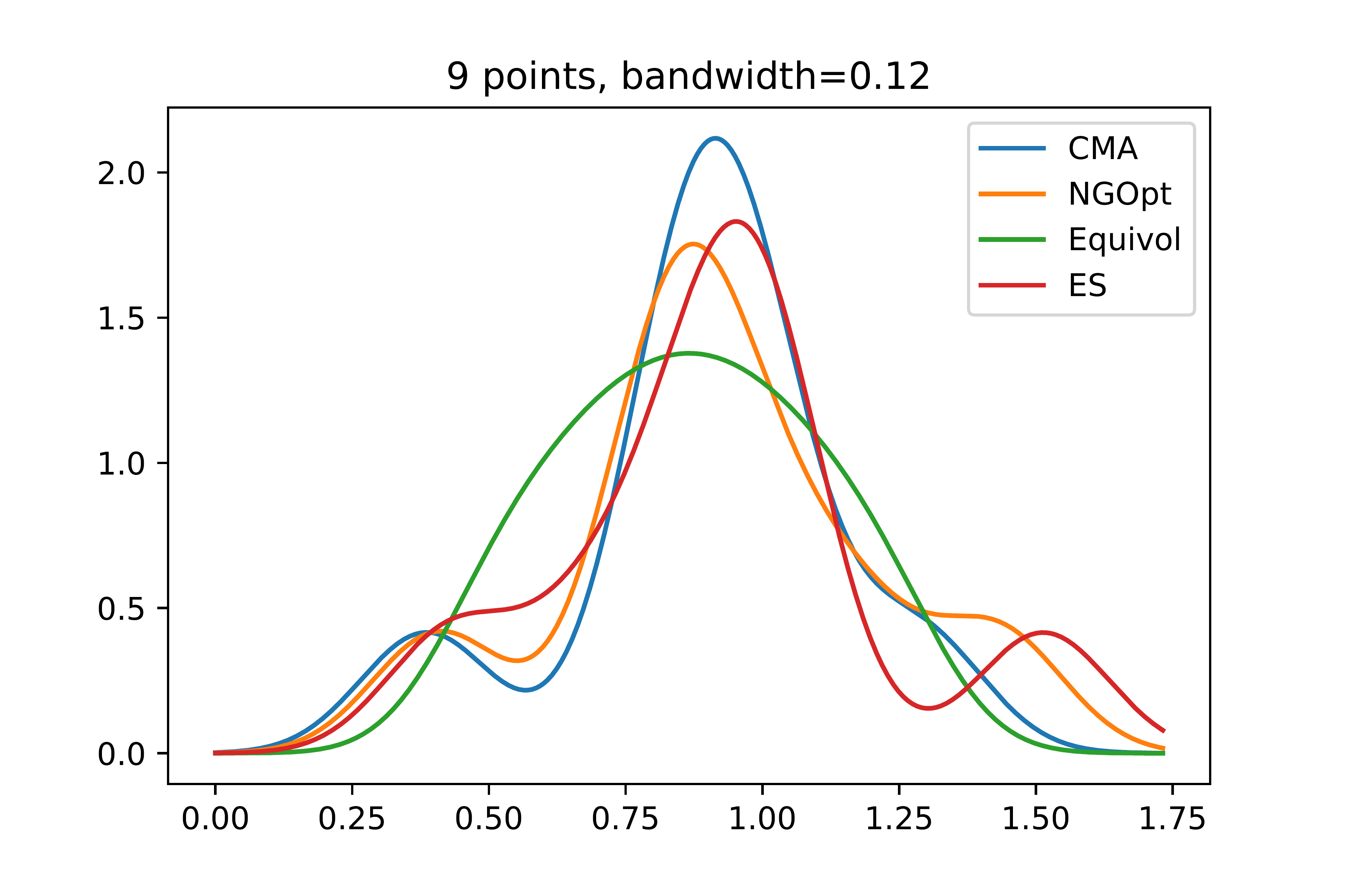} \hfill
 \medskip
 \includegraphics[width=0.32\textwidth]{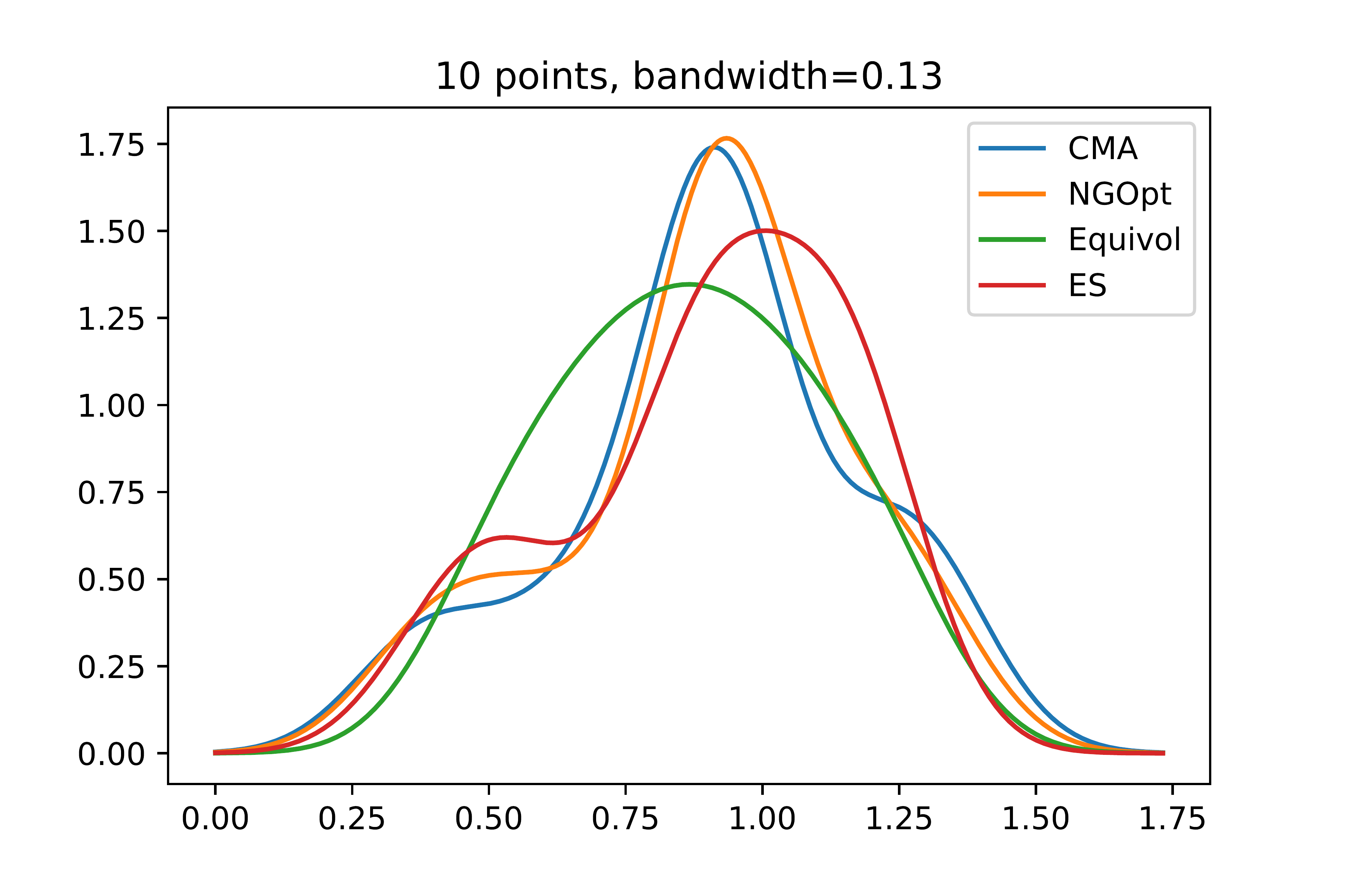} \hfill
  \includegraphics[width=0.32\textwidth]{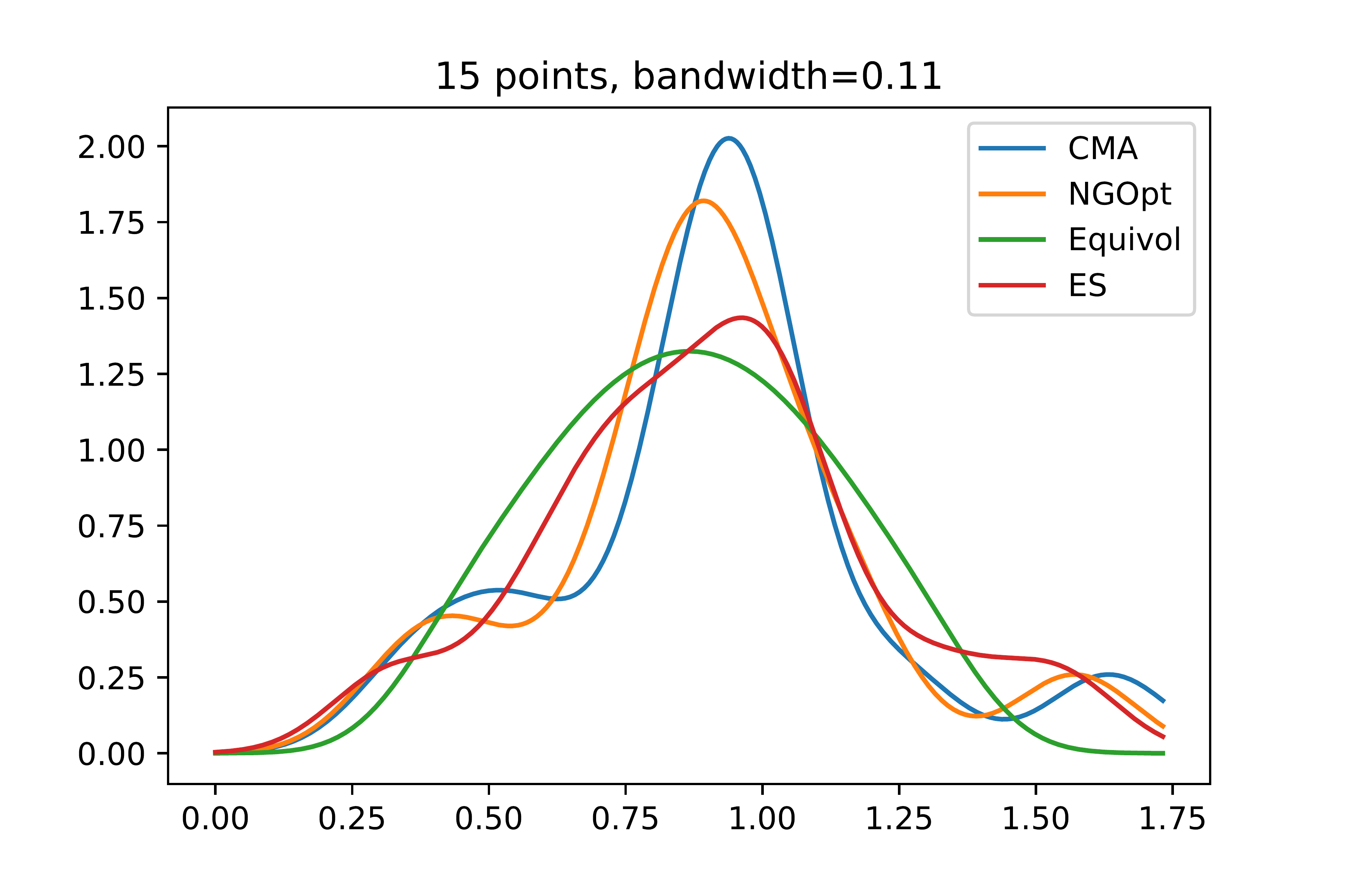} \hfill
  \includegraphics[width=0.32\textwidth]{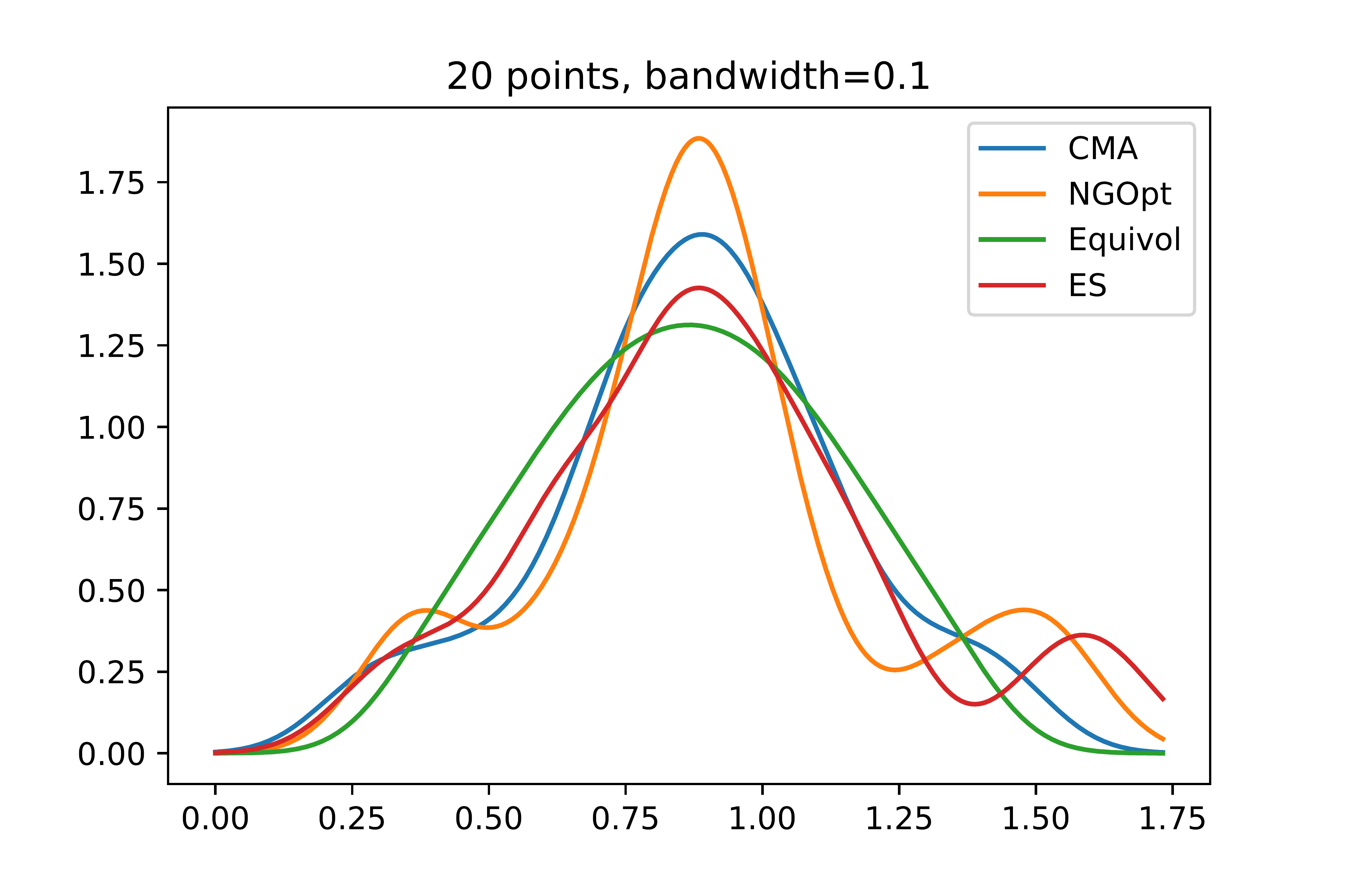} \hfill
  \caption{Kernel density estimate plots with Gaussian kernels for the best partitioning points obtained by the optimisers in dimension 3 as well as the equivolume sets for $N=5,7,9,10,15,20$.}
  \label{kernels3}
\end{figure}

Overall, while the equivolume stratification provides much better expected $\mathcal{L}_2$-discrepancy than random uniformly distributed points, our results suggest that it can still be improved. 
Finally, it is important to note that our black-box optimisers seem to return sets with similar structure as illustrated in Figure \ref{kernels2} and Figure \ref{kernels3} while the individual points in the respective sets are quite different; see tables in Appendix \ref{app3}. This suggests a plateau-like structure of the underlying space which is a blessing and a curse at the same time. One the one hand, we can safely assume that we found almost optimal solutions, on the other hand it leaves little hope to actually determine the minimal set.

\pagebreak
\vspace{3mm}
\paragraph{\bf Open questions} This paper presents results on optimal stratification of $[0,1]^2$ for the class of partitions whose partitioning hyperplanes are orthogonal to the main diagonal of the unit cube. Hence, 
\begin{enumerate}
    \item how much can one improve upon the expected discrepancy values  of the stratified point sets obtained in this text by relaxing some, or all of the constraints on the partitioning lines?
    \item what is the optimal partition (with respect to the expected discrepancy of the resulting stratified point set) of $[0,1]^d$ for $d \geq 2$?
\end{enumerate}

\section*{Acknowledgements}

The first author would like to thank Carola Doerr, Koen van der Blom and Diederick Vermetten for their help with choosing good black-box optimisers.



\appendix
\section*{Appendices}
\renewcommand{\thesubsection}{\Alph{subsection}}

\subsection{Theorem of Berry-Esseen}\label{app1}
In this short appendix we recall the currently smallest constant in the Berry-Esseen Theorem making the rate of convergence in Theorem \ref{normalconverge}  explicit.

\begin{theorem}[Berry-Esseen Theorem]
	Suppose that $X_1, X_2, \ldots, X_n$ are i.i.d. and let $\mu=\mathbb{E}X_1$, $\sigma^2 = \Var(X_1) = \mathbb{E}\left[ (X_1-\mu)^2\right]$ and $\rho=\mathbb{E}\left[ |X_1 - \mu|^3 \right]$ is the $3^{rd}$ absolute central moment. Let $R= \frac{1}{n} \sum_{i=1}^n X_i$ and denote by $$F_{Z_n}(t) = \mathbb{P}\left( \sqrt{n} \frac{R-\mu}{\sigma} \leq t \right)$$ the cumulative distribution function (CDF) of the normalised sample average. If $\rho < \infty$, then 
	
	\begin{equation*}
		\sup_{x} \left| F_{Z_n}(x) - \Phi(x) \right| \leq \frac{C \rho}{\sigma^3 \sqrt{n}}
	\end{equation*}
	
	where $\Phi$ is the CDF of the standard normal distribution and $C$ is an absolute constant.
\end{theorem}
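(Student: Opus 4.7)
The plan is to use the classical characteristic-function approach combined with the Esseen smoothing inequality. After a standard reduction (subtracting $\mu$ and dividing by $\sigma$), one may assume $\mu=0$ and $\sigma=1$; the third absolute central moment becomes $\rho/\sigma^3$, which is exactly the ratio appearing on the right-hand side. Writing $Y_i = (X_i-\mu)/\sigma$ and $Z_n = n^{-1/2}\sum_i Y_i$, I would denote by $\phi$ the characteristic function of $Y_1$, so that the characteristic function of $Z_n$ is $\phi_n(t) = \phi(t/\sqrt n)^n$, while that of the standard normal is $\psi(t)=e^{-t^2/2}$.

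The first real step is to establish the Esseen smoothing inequality: for any distribution function $F$ with characteristic function $\phi_F$, and any differentiable distribution function $G$ with characteristic function $\phi_G$ and $m:=\sup_x G'(x)<\infty$, one has for every $T>0$
\begin{equation*}
\sup_x |F(x)-G(x)| \;\leq\; \frac{1}{\pi}\int_{-T}^{T}\left|\frac{\phi_F(t)-\phi_G(t)}{t}\right|\dd t \;+\; \frac{24\, m}{\pi T}.
\end{equation*}
I would prove this by convolving $F-G$ with a kernel whose Fourier transform has compact support in $[-T,T]$ (e.g.\ the Fejér kernel), bounding the convolution error by $24m/(\pi T)$, and using Parseval/Fourier inversion to rewrite the main term as an integral over $[-T,T]$ of $(\phi_F-\phi_G)/t$. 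Applied to $F=F_{Z_n}$ and $G=\Phi$ we have $m=1/\sqrt{2\pi}$.

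The second step is to estimate $|\phi_n(t)-\psi(t)|$ pointwise. A Taylor expansion with integral remainder, using $\E Y_1=0$, $\E Y_1^2=1$, $\E|Y_1|^3=\rho/\sigma^3=:\tilde\rho$, yields $|\phi(s) - 1 + s^2/2| \leq \tilde\rho\, |s|^3/6$. For $|t|\leq T:=c\sqrt{n}/\tilde\rho$ with $c$ a small absolute constant, one deduces $|\phi(t/\sqrt n)| \leq \exp(-t^2/(3n))$ and hence, via the standard bound $|a^n-b^n|\leq n|a-b|\max(|a|,|b|)^{n-1}$,
\begin{equation*}
\bigl|\phi_n(t) - \psi(t)\bigr| \;\leq\; n\bigl|\phi(t/\sqrt n) - e^{-t^2/(2n)}\bigr|\cdot e^{-(n-1)t^2/(3n)} \;\leq\; C_1\, \frac{\tilde\rho\,|t|^3}{\sqrt n}\, e^{-t^2/4}.
\end{equation*}
Dividing by $|t|$ and integrating against $\dd t$ over $[-T,T]$ produces a bound of order $\tilde\rho/\sqrt n$.

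Finally, I would insert this into the smoothing inequality. The first term is $O(\tilde\rho/\sqrt n)$ as above, and the boundary term $24m/(\pi T)$ is also $O(\tilde\rho/\sqrt n)$ by the choice of $T$. Collecting the constants gives
\begin{equation*}
\sup_x|F_{Z_n}(x)-\Phi(x)|\leq \frac{C\tilde\rho}{\sqrt n}=\frac{C\rho}{\sigma^3\sqrt n},
\end{equation*}
which is the claim. The main obstacle is controlling the tails $|t|>T$ and making sure the Taylor estimate $|\phi_n(t)-\psi(t)|$ remains valid on the full range $[-T,T]$: the bound $|\phi(t/\sqrt n)|\leq e^{-t^2/(3n)}$ breaks down once $|t|$ becomes comparable to $\sqrt n/\tilde\rho$, so the choice of $T$ must be calibrated to this threshold, and the absolute constant $C$ depends only on this calibration (refinements of which, as referenced in \cite{Shev}, give the currently best known values).
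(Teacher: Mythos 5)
The paper does not prove this statement at all: it is quoted verbatim as the classical Berry--Esseen theorem, with the result attributed to \cite{berry} and the sharp constant $C<0.4748$ taken from \cite{Shev}; the authors only \emph{apply} it (with $\mu=1/2$, $\sigma=1/(2\sqrt3)$, $\rho=1/32$) to obtain the explicit rate in \eqref{rateOfConv}. Your sketch is the standard Esseen smoothing / characteristic-function proof, and as an outline it is sound: the reduction to $\mu=0$, $\sigma=1$, the smoothing inequality with the Fej\'er-type kernel, the Taylor bound $|\phi(s)-1+s^2/2|\le\tilde\rho|s|^3/6$, the telescoping estimate $|a^n-b^n|\le n|a-b|\max(|a|,|b|)^{n-1}$, and the choice $T=c\sqrt n/\tilde\rho$ are exactly the ingredients of the textbook argument, and they do deliver the existence of an absolute constant $C$, which is all the statement asserts. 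Two points you gloss over but should acknowledge: the bound $|\phi(t/\sqrt n)|\le e^{-t^2/(3n)}$ on $[-T,T]$ relies on $\tilde\rho\ge 1$ (Lyapunov's inequality $\E|Y_1|^3\ge(\E Y_1^2)^{3/2}=1$) together with a sufficiently small $c$, and one must dispose separately of the regime where $\tilde\rho/\sqrt n$ is of order one, where the claimed inequality is trivial because the left-hand side never exceeds $1$. Finally, be aware that this route yields only a crude explicit constant (classically around $7.59$, or with more care below $1$); the value $0.4748$ that the paper actually uses in \eqref{rateOfConv} requires the far more delicate optimisations of \cite{Shev} and is not recoverable from your argument.
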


The currently best bound for the constant is $C < 0.4748$; see \cite{Shev}. Hence, noting that for our i.i.d random variables $X_1, \ldots, X_d$ we have $\mu = 1/2$, $\sigma = 1/(2\sqrt{3})$ and $\rho = 1/32$  we can implement the Berry-Esseen Theorem to see that the rate of convergence noted in Theorem \ref{normalconverge} is

\begin{equation}\label{rateOfConv}
	\sup_{r \in [-d, 0]} \left| F_{Z_d}(r) - \Phi(r) \right| \leq \frac{0.4748 \cdot (2\sqrt{3})^3}{32 \sqrt{d}} = 0.6167... \cdot \frac{1}{\sqrt{d}}.
\end{equation}

\subsection{Different Optimisers}\label{app2}
\paragraph{\bf CMA-ES}
The Covariance Matrix Adaptation Evolution Strategy, also known as CMA-ES is a family of heuristic optimisation techniques for numerical optimisation. It was initially introduced in \cite{Hansen96} (with an extension in \cite{Hansen01}), but modifications have been suggested over the years, leading to many different variants (see for example \cite{vRijn} for some popular modifications). We give here a brief description of the general principle, more details can be found in a tutorial paper by Hansen \cite{HansenTuto} as well as in a more recent presentation by Akimoto and Hansen \cite{AkiHans22}. In our experiments, we use the Diagonal-CMA version, described in \cite{AkiHansDiag}.

The general principle behind CMA-ES is to introduce a covariance matrix associated to a multivariate Gaussian distribution and to learn second order information on this distribution. The mean of this distribution represents the current best guess while the covariance matrix determines the shape of the distribution ellipsoid. At each step, this covariance matrix is used to generate a number of solution candidates. These candidates are evaluated for the function we are trying to optimize and then used to update the mean and covariance matrix of the multivariate Gaussian distribution. The step size for each update is also changed during the algorithm, by comparing the current path length - how much we change the mean of the distribution - with the expected one if steps were independent. If the path length is shorter, the steps are anti-correlated and the step size should be decreased, if the path length is longer it should be increased. To update the mean, a weighted sum of part (or all) of the new samples is done, where weights are higher for better samples and the chosen step size is taken into account. For the covariance, the new covariance matrix will depend both on a weighted sum of the new samples and on the evolution path, the sum of past steps for the mean.

\paragraph{\bf NGOpt:} The second black-box optimiser we used is NGOpt. An older version was presented in \cite{NGOpt8} (in which NGOpt8 is called ABBO), more recent modifications haven't been published but were incorporated in the NGOpt optimiser available in the {\tt{Nevergrad}} Python module. NGOpt is an algorithm selection technique, which will select the best algorithm to run on a problem. It selects the algorithm(s) to run based on features known in advance - dimension of the problem, type of variables, bounds for the variables or the budget for example - as well as based on information obtained during the execution of the algorithms. It can run multiple algorithms in parallel before only continuing with the best one or run multiple algorithms sequentially. CMA-ES is one of the algorithms that NGOpt can select, but given our problem setting (dimension below 10, budget 1000), it does not call the same version of CMA as our CMA experiment. Given the lack of information on the function we are minimizing as well as our lack of budget for hyperparameter tuning and algorithm configuration, using NGOpt's intrinsic algorithm choice is a valuable help in guaranteeing that our chosen optimisers should be good.

In our problem formulation, potential solutions are of the shape $(p_i)_{i \in \{1,\ldots,k\}}$  where for all $i \in\{1,\ldots,N-1\}$ with $i <j$ implies $p_i \leq p_j$. CMA-ES and a large part of the algorithms in NGOpt try to learn some relation between the variables. For this, they use several evaluated points and update the sampling distribution. Most of these optimisers do not work well with constraints (they typically sample points until they fit the constraints, potentially spending a lot of time) therefore our approach was to reorder the solution parameters to always have a sorted point set. After evaluating the fitness of a \emph{sorted} candidate point, the optimiser would consider this to be the fitness of the \emph{non-sorted} original point. The distribution update could then be misled by this sorting.

\paragraph{\bf (1+1)-Evolutionary Strategy:} To limit the impact of the above problem, we also run our experiments with a more traditional (1+1)-Evolutionary Strategy (ES). At each step, the current solution is modified by adding a random variable (in our case a Gaussian random variable) to each element of the solution, scaled by a varying step size. The best solution between the new one and the old solution is then kept for the next step. The step size is modified depending on the mutation success, according to the $1/5$-th rule \cite{SchumSteig}: if too many steps are successful, the step size is increased, and it is decreased otherwise. This iterative process continues until we reach the given budget. There are many different versions of (1+1)-Evolutionary Strategies generally changing the mutation rates. We will be using here the { \tt Parametrized(1+1)} implementation from the {\tt Nevergrad} package with a Gaussian mutation.

\subsection{Additional Tables}\label{app3}
In this last appendix, we place several additional tables for the reader's perusal which give further and exact numerical results relating to the work in Section \ref{sec:num}.

\paragraph{\bf Optimal Point Sets:} Table \ref{points2d} in the main text gives the approximations to the optimal point set $\cS^*(N,2)$ for $3 \leq N \leq 10$ returned by CMA-ES. The tables below show analogous information returned by the two other optimisers as well as the results in dimension 3.

\begin{table}[h!]
    \centering
    \begin{tabular}{|c|c|}
    \hline
        N & Point set obtained by NGOpt  \\
        \hline
        3 & [0.548808, 1.055807] \\
        \hline
        4 & [0.418457, 0.882434, 1.007727]\\
        \hline
        5 & [0.391263, 0.620868, 0.956307, 1.037225] \\
        \hline
        6 & [0.347313, 0.560015, 0.857877, 0.882838, 1.118863]\\
        \hline
        7 & [0.347991, 0.489339, 0.669577, 0.908221, 0.983918, 1.036891]\\
        \hline
        8 & [0.316077, 0.478213, 0.570470, 0.852702, 0.907686, 0.932709 1.067521]\\
        \hline
        9 & [0.285198, 0.470163, 0.568636, 0.610603, 0.871406, 0.926506, 1.012885, 1.141138]\\
        \hline
        10 & [0.272273, 0.443108, 0.532019, 0.631543, 0.819876, 0.864991, 0.925215, 1.008189, 1.132356] \\
        \hline
        15 & [0.19888, 0.3777, 0.444808, 0.543373, 0.546997, 0.575153, 0.63604, \\
        &0.690127, 0.892686, 0.909171, 1.058866, 1.128335, 1.159531, 1.271836] \\
        \hline
        20 & [0.223799, 0.287984, 0.324687, 0.485897, 0.499846, 0.54175, 0.57198, 0.589029, 0.615946, \\
        & 0.7925, 0.817158, 0.858417, 0.908158, 0.938206, 0.960501, 0.982925, 1.101553, 1.159329, 1.319433]\\
        \hline 
    \end{tabular}
    \vspace{1mm}
    \caption{Optimal partitioning points returned by NGOpt in dimension 2.}
    \label{ListNGOPT}
\end{table}

\begin{table}[h!]
    \centering
    \begin{tabular}{|c|c|}
    \hline
        N & Point set obtained by (1+1)-ES  \\
        \hline
        3 & [0.385772, 1.414214] \\
        \hline
        4 & [0.361702, 0.612305, 1.414214]\\
        \hline
        5 & [0.417516, 0.605697, 0.93214, 1.135903] \\
        \hline
        6 & [0.372617, 0.566202, 0.858476, 0.872371, 1.057533]\\
        \hline
        7 & [0.361215, 0.476081, 0.69005, 0.956802, 0.961292, 0.997134]\\
        \hline
        8 & [0.331794, 0.448127, 0.615812, 0.762824, 0.936691, 0.948517, 1.194328]\\
        \hline
        9 & [0.272122, 0.50579, 0.579889, 0.631461, 0.841224, 0.872, 1.029753, 1.189806]\\
        \hline
        10 & [0.300137, 0.411182, 0.477082, 0.713986, 0.775226, 0.810514, 0.878079, 1.002694, 1.203619] \\
        \hline 
        15 & [0.190774, 0.388459, 0.388733, 0.455958, 0.596142, 0.601442, 0.606818, \\
        & 0.828665, 0.83254, 0.873472, 1.012516, 1.022808, 1.121645, 1.174104] \\
        \hline
        20 & [0.249813, 0.325112, 0.359365, 0.461931, 0.483501, 0.574615, 0.5808, 0.584061, 0.606904, \\
        & 0.75478, 0.84873, 0.911128, 0.916237, 0.93133, 0.942195, 0.967771, 1.009936, 1.137893, 1.363866] \\
        \hline
    \end{tabular}
    \vspace{1mm}
    \caption{Optimal partitioning points returned by (1+1)-ES in dimension 2}
    \label{ListES}
\end{table}

\begin{table}[h!]
    \centering
    \begin{tabular}{|c|c|}
    \hline
        N & Point set obtained by NGOpt  \\
        \hline
        3 & [0.795798, 1.286294] \\
        \hline
        4 & [0.640472, 1.117247, 1.310707]\\
        \hline
        5 & [0.566418, 1.047576, 1.083171, 1.150805] \\
        \hline
        6 & [0.489396, 0.98104, 1.013217, 1.048558, 1.087265]\\
        \hline
        7 & [0.448206, 0.878266, 0.91581, 0.982795, 1.063735, 1.125645]\\
        \hline
        8 & [0.42716, 0.800013, 0.863311, 0.910579, 1.006799, 1.11747, 1.242571]\\
        \hline
        9 & [0.408185, 0.77803, 0.830785, 0.882296, 0.890503, 1.011247, 1.169372, 1.43053]\\
        \hline
        10 & [0.399794, 0.603748, 0.89242, 0.913219, 0.926596, 0.926902, 0.967523, 1.16262, 1.287299] \\
        \hline 
        15 & [0.351117, 0.478768, 0.688796, 0.796136, 0.821815, 0.862973, 0.879195, \\
        & 0.90137, 0.9374, 0.946849, 1.045653, 1.094707, 1.186683, 1.571684] \\
        \hline
        20 & [0.34235, 0.389883, 0.57646, 0.680487, 0.762054, 0.793333, 0.822658, 0.827348, 0.886069, \\
        & 0.899673, 0.902049, 0.906803, 0.948087, 0.955605, 1.031147, 1.078856, 1.323314, 1.470775, 1.546017]\\
        \hline
        
    \end{tabular}
    \vspace{1mm}
    \caption{Optimal partitioning points returned by NGOpt in dimension 3}
    \label{ListNGOPT3}
\end{table}

\begin{table}[h!]
    \centering
    \begin{tabular}{|c|c|}
    \hline
        N & Point set obtained by (1+1)-ES  \\
        \hline
        3 & [0.828487, 1.184358] \\
        \hline
        4 & [0.688084, 0.995426, 1.23835]\\
        \hline
        5 & [0.556585, 0.958897, 1.08893, 1.166941] \\
        \hline
        6 & [0.504186, 0.890861, 1.01977, 1.070676, 1.220425]\\
        \hline
        7 & [0.483307, 0.846026, 0.928213, 1.011243, 1.053627, 1.264038]\\
        \hline
        8 & [0.438463, 0.755186, 0.855466, 0.965389, 0.971452, 1.108291, 1.364404]\\
        \hline
        9 & [0.432133, 0.683787, 0.857462, 0.900443, 0.985954, 0.99401, 1.036843, 1.513559]\\
        \hline
        10 & [0.433714, 0.571719, 0.82941, 0.873472, 0.954025, 0.983681, 1.123199, 1.166486, 1.190781] \\
        \hline
        15 & [0.320788, 0.546152, 0.656237, 0.732797, 0.733517, 0.851861, 0.859568, \\
        & 0.958379, 0.997855, 1.003527, 1.044023, 1.112383, 1.3169, 1.538414]\\
        \hline
        20 & [0.308744, 0.45028, 0.612212, 0.618609, 0.652708, 0.769147, 0.791116, 0.831432, 0.852191, \\
        & 0.877652, 0.927507, 0.950297, 0.997164, 1.051177, 1.070492, 1.172144, 1.212012, 1.531995, 1.640873] \\
        \hline 
    \end{tabular}
    \vspace{1mm}
    \caption{Optimal partitioning points returned by (1+1)-ES in dimension 3}
    \label{ListES3}
\end{table}

\begin{table}[h!]
    \centering
    \begin{tabular}{|c|c|}
    \hline
        N & Point set obtained by CMA-ES  \\
        \hline
        3 & [0.852603, 1.229802] \\
        \hline
        4 & [0.649511, 1.103493, 1.233462]\\
        \hline
        5 & [0.557872, 0.993939, 1.111959, 1.191173] \\
        \hline
        6 & [0.47672, 0.960814, 1.033193, 1.088396, 1.110393]\\
        \hline
        7 & [0.450648, 0.878901, 0.905647, 1.027906, 1.096725, 1.127983]\\
        \hline
        8 & [0.417508, 0.804702, 0.962827, 0.972907, 0.985132, 0.995874, 1.222424]\\
        \hline
        9 & [0.385481, 0.824133, 0.863861, 0.864913, 0.950659, 0.968246, 1.027493, 1.284941]\\
        \hline
        10 & [0.386811, 0.644514, 0.887476, 0.897878, 0.900528, 0.936027, 0.949786, 1.220786, 1.309354] \\
        \hline 
        14 & [0.358027, 0.497384, 0.609137, 0.81638, 0.875619, 0.901655, 0.904081, \\
        & 0.95021, 0.959341, 0.975178, 1.012549, 1.015003, 1.248424, 1.633825]\\
        \hline
        19 & [0.283066, 0.444381, 0.588593, 0.67853, 0.741726, 0.756299, 0.785411, 0.793158, 0.891821, \\ 
        & 0.901059, 0.918331, 0.928474, 0.947623, 1.036847, 1.039228, 1.125772, 1.135725, 1.294058, 1.436106] \\
        \hline
    \end{tabular}
    \vspace{1mm}
    \caption{Optimal partitioning points returned by CMA-ES in dimension 3}
    \label{ListCM3}
\end{table}

\paragraph{\bf Exact Discrepancy Values:} Figure \ref{fig:percentagecomparison} in the main text gives a percentage comparison of the expected $\mathcal{L}_2$-discrepancies of the various point sets. For completeness sake and as a final attachment, Table \ref{Compare2} gives the exact expected discrepancy of the point same point sets.

\begin{table}[h!]
    \centering
    \begin{tabular}{|c|c|c|c|c|c|}
    \hline
        $N$ & $R(N,2)$ & $\cS(N,2)$ & CMA-ES & NGOpt & (1+1)-ES  \\
         \hline
         3 & 0.04614 & 0.02917 & 0.02696 & 0.02682 & 0.03061\\
         4 & 0.03441 & 0.02035 & 0.01883 & 0.01891 & 0.023\\
         5 & 0.02774 & 0.01560 & 0.01453 & 0.01464 & 0.01472 \\
         6 & 0.02327 & 0.01272 & 0.01191 & 0.01195 & 0.01192 \\
         7 & 0.01990 & 0.01070 & 0.01007 & 0.01006 & 0.0108 \\
         8 & 0.01714 & 0.009202 & 0.008722 & 0.008668 & 0.008866\\
         9 & 0.01547 & 0.008138 & 0.007738 &  0.007764 & 0.007829\\
         10 & 0.01492 & 0.007266 & 0.006961 & 0.006939 & 0.007051 \\
         15 & 0.009320 & 0.004746 & 0.004668 & 0.004668 & 0.004647 \\
         20 & 0.006955 & 0.003532 & 0.003541 & 0.003558 & 0.003478 \\
         \hline
    \end{tabular}
	\vspace{3mm}
    \caption{Comparison of the expected $\mathcal{L}_2$-discrepancies of different point sets: uniformly distributed random points $R(N,2)$, the equivolume partition corresponding to $\cS(N,2)$ and the best sets obtained by the three black-box optimisers. The discrepancy values are evaluated with 10000 repetitions.}
    \label{Compare2}
\end{table}

\begin{table}[h!]
    \centering
    \begin{tabular}{|c|c|c|c|c|c|}
    \hline
        $N$ & $R(N,3)$ & $\cS(N,3)$ & CMA-ES & NGOpt & (1+1)-ES  \\
         \hline
         3 & 0.0295 & 0.0220 & 0.01960 & 0.01968 & 0.01979\\
         4 & 0.0221 & 0.0156 & 0.01433 & 0.01431 & 0.01433\\
         5 & 0.0176 & 0.0121 & 0.01107 & 0.01104 & 0.01113 \\
         6 & 0.0148 & 0.00997 & 0.008983 & 0.008979 & 0.009115 \\
         7 & 0.0125 & 0.00836 & 0.007634 & 0.007718 & 0.007720 \\
         8 & 0.011 & 0.00727 & 0.0067 & 0.006712 & 0.006741\\
         9 & 0.00989 & 0.00640 & 0.005973 &  0.005949 & 0.005967\\
         10 & 0.00876 & 0.005713 & 0.005352 & 0.005358 & 0.005345 \\
         15 & 0.00594 & 0.00376 & 0.003569 & 0.003561 & 0.003579 \\
         20 & 0.00442 & 0.0028 & 0.002707 & 0.002692 & 0.002708 \\
         \hline
    \end{tabular}
	\vspace{3mm}
    \caption{Comparison of the expected $\mathcal{L}_2$-discrepancies of different point sets: uniformly distributed random points $R(N,3)$, the equivolume partition corresponding to $\cS(N,3)$ and the best sets obtained by the three black-box optimisers. The discrepancy values are evaluated with 10000 repetitions.}
    \label{Compare3}
\end{table}

\end{document}